\newtheorem*{maintheorem*}{Main Theorem}
\newtheorem{theorem}{Theorem}[section]
\newtheorem{prop}[theorem]{Proposition}
\newtheorem{cor}[theorem]{Corollary}
\theoremstyle{definition}
\newtheorem{definition}[theorem]{Definition}
\newtheorem{remark}[theorem]{Remark}
\newtheorem{example}[theorem]{Example}
\numberwithin{equation}{section}
\newcommand{\nn}{\mathbb{N}}
\newcommand{\qq}{\mathbb{Q}}
\newcommand{\rr}{\mathbb{R}}
\newcommand{\zz}{\mathbb{Z}}
\newcommand{\cone}{\mathsf{cone}}
\newcommand{\aff}{\mathsf{aff}}
\newcommand{\gp}{\text{gp}}
\newcommand{\rank}{\mathsf{rank}}
\newcommand{\rk}{\text{rank}}
\newcommand{\pl}{\mathcal{L}}
\newcommand{\ps}{\mathcal{S}}
\newcommand{\ii}{\mathcal{A}}
\newcommand{\uu}{\mathcal{U}}
\providecommand\ldb{\llbracket}
\providecommand\rdb{\rrbracket}
\keywords{length-factoriality, factorization, unique factorization, other-half-factoriality, finite-rank monoid, Dedekind domain}
\subjclass[2010]{Primary: 13F15, 13A05; Secondary: 20M13, 13F05}
\begin{document}
	
	\mbox{}
	\title{Length-factoriality in commutative monoids and \\ integral domains}
	
	\author{Scott T. Chapman}
	\address{Department of Mathematics and Statistics\\Sam Houston State University\\Huntsville, TX 77341}
	\email{scott.chapman@shsu.edu}
	
	\author{Jim Coykendall}
	\address{Department of Mathematical Sciences\\Clemson University\\Clemson, SC 29634}
	\email{jcoyken@clemson.edu}
	
	\author{Felix Gotti}
	\address{Department of Mathematics\\MIT\\Cambridge, MA 02139}
	\email{fgotti@mit.edu}
	
	\author{William W. Smith}
	\address{Department of Mathematics\\The University of North Carolina\\Chapel Hill, NC 27599}
	\email{wwsmith@email.unc.edu}
	
\date{\today}

\begin{abstract}
	 An atomic monoid $M$ is called a length-factorial monoid (or an other-half-factorial monoid) if for each non-invertible element $x \in M$ no two distinct factorizations of $x$ have the same length. The notion of length-factoriality was introduced by Coykendall and Smith in 2011 as a dual of the well-studied notion of half-factoriality. They proved that in the setting of integral domains, length-factoriality can be taken as an alternative definition of a unique factorization domain. However, being a length-factorial monoid is in general weaker than being a factorial monoid (i.e., a unique factorization monoid). Here we further investigate length-factoriality. First, we offer two characterizations of a length-factorial monoid $M$, and we use such characterizations to describe the set of Betti elements and obtain a formula for the catenary degree of $M$. Then we study the connection between length-factoriality and purely long (resp., purely short) irreducibles, which are irreducible elements that appear in the longer (resp., shorter) part of any unbalanced factorization relation. Finally, we prove that an integral domain cannot contain purely short and a purely long irreducibles simultaneously, and we construct a Dedekind domain containing purely long (resp., purely short) irreducibles but not purely short (resp., purely long) irreducibles.
\end{abstract}
\medskip

\maketitle

%\tableofcontents

\bigskip
%%%%%%%%%%%
%%%%%%%%%%%
\section{Introduction}
\label{sec:intro}

An atomic monoid $M$ is called half-factorial if for all non-invertible $x \in M$, any two factorizations of~$x$ have the same length. In contrast to this, we say that $M$ is length-factorial if for all non-invertible $x \in M$, any two distinct factorizations of $x$ have different lengths. An integral domain is called half-factorial if its multiplicative monoid is half-factorial. Half-factorial monoids and domains have been systematically investigated during the last six decades in connection with algebraic number theory, combinatorics, and commutative algebra: from work that appeared more than two decades ago, such as ~\cite{lC60,aZ80,jC97,jC99}, to more recent literature, including \cite{GKR15,GLTZ19,GZ19,MO16,aP12,mR11,PS20}. The term ``half-factorial" was coined by Zaks in~\cite{aZ80}. On the other hand, length-factorial monoids were first investigated in 2011 by the second and fourth authors~\cite{CS11}. As their main result, they proved that unique factorization domains can be characterized as integral domains whose multiplicative monoids are length-factorial. Recently, length-factorial monoids have been classified in the class of torsion-free rank-$1$ monoids~\cite{fG20a}, in the class of submonoids of finite-rank free monoids~\cite{fG20}, and in the class of monoids of the form $\nn_0[\alpha]$, where $\alpha$ is a positive algebraic numbers~\cite{CG20}. 
\smallskip

Here we offer a deeper investigation of length-factoriality in atomic monoids and integral domains as well as some connections between length-factoriality and the existence of certain extremal irreducible elements, which when introduced were called purely long and purely short irreducibles~\cite{CS11}. We say that a monoid satisfies the PLS property if it contains both purely short and purely long irreducibles. Every length-factorial monoid satisfies the PLS property, and here we determine classes of small-rank monoids where every monoid satisfying the PLS property is length-factorial. We will also establish that the multiplicative monoid of an atomic domain never satisfies the PLS property. As a result, we will rediscover that the multiplicative monoid of an integral domain is length-factorial if and only if the integral domain is a unique factorization domain, which was the main result in~\cite{CS11}.
\smallskip

In Section~\ref{sec:general OHFMs}, which is the first section of content, we offer two characterizations of length-factorial monoids. The first of such characterizations is given in terms of the integral independence of the set of irreducibles and the set of irreducibles somehow shifted. The second characterization states that a non-factorial monoid is length-factorial if and only if the kernel congruence of its factorization homomorphism is nontrivial and can be generated by a single factorization relation. This second characterization will allow us to recover~\cite[Proposition~2.9]{CS11}. In addition, we use the second characterization to determine the set of Betti elements and study the catenary degree of a length-factorial monoid.
\smallskip

In Section~\ref{sec:long and short irreducibles}, we delve into the study of purely long and purely short irreducibles. For an element $x$ of a monoid $M$, a pair of factorizations $(z_1, z_2)$ of $x$ is called irredundant if they have no irreducibles in common and is called unbalanced if $|z_1| \neq |z_2|$. An irreducible $a$ of $M$ is called purely long (resp., purely short) provided that for any pair of irredundant and unbalanced factorizations of the same element, the longer (resp., shorter) factorization contains $a$. We prove that the set of purely long (and purely short) irreducibles of an atomic monoid is finite, and we use this result to decompose any atomic monoid as a direct sum of a half-factorial monoid and a length-factorial monoid.
\smallskip

Section~\ref{sec:finite rank monoids} is devoted to the study of length-factoriality in connection with the PLS property on the class consisting of finite-rank atomic monoids. Observe that this class comprises all finitely generated monoids, all additive submonoids of $\zz^n$, and a large class of Krull monoids. We start by counting the number of non-associated irreducibles of a finite-rank length-factorial monoid. Then we show that for monoids of rank at most $2$, being a length-factorial monoid is equivalent to satisfying the PLS property. We conclude the section by offering further characterizations of length-factoriality for rank-$1$ atomic monoids.

\smallskip
In Section~\ref{sec:integral domains}, we investigate the existence of purely long and purely short irreducibles in the setting of integral domains, arriving to the surprising fact that an integral domain cannot simultaneously contain a purely long irreducible and a purely short irreducible. As a consequence of this fact, we rediscover the main result of~\cite{CS11}, that the multiplicative monoid of an integral domain is length-factorial if and only if the integral domain is a unique factorization domain (a shorter proof of this result was later given in~\cite[Theorem~2.3]{AA10}). We also exhibit examples of Dedekind domains containing purely long (resp., purely short) irreducibles, but not purely short (resp., purely long) irreducibles.

\bigskip
%%%%%%%%%%%%
%%%%%%%%%%%%
\section{Fundamentals}
\label{sec:fundamentals}

\smallskip
%%%%%%%%%%%%%%%
\subsection{General Notation}

Throughout this paper, we let $\mathbb{N}$ denote the set of positive integers, and we set $\nn_0 := \nn \cup \{0\}$. For $a,b \in \zz$ with $a \le b$, we let $\ldb a,b \rdb$ be the discrete interval from $a$ to $b$, that is, $\ldb a, b \rdb = \{n \in \zz : a \le n \le b\}$. In addition, for $S \subseteq \rr$ and $r \in \rr$, we set $S_{\le r} := \{s \in S : s \le r\}$ and, with similar meaning, we use the symbols $S_{\ge r}$, $S_{< r}$, and $S_{> r}$. If $q \in \qq_{> 0}$, then we let $\mathsf{n}(q)$ and $\mathsf{d}(q)$ denote the unique positive integers such that $q = \mathsf{n}(q)/\mathsf{d}(q)$ and $\gcd(\mathsf{n}(q), \mathsf{d}(q)) = 1$. Unless we specify otherwise, when we label elements in a certain set by $s_i, s_{i+1} \dots, s_j$, we always assume that $i,j \in \nn_0$ and that $i \le j$.

\smallskip
%%%%%%%%%%%%%%%%%%
\subsection{Commutative Monoids}

We tacitly assume that each monoid (i.e., a semigroup with an identity element) we treat here is cancellative and commutative. As all monoids we shall be dealing with are commutative, we will use additive notation unless otherwise specified. For the rest of this section, let~$M$ be a monoid. We let $M^\bullet$ denote the set $M \! \setminus \! \{0\}$, and we let $\uu(M)$ denote the group consisting of all the units (i.e., invertible elements) of $M$. We say that $M$ is \emph{reduced} if $\uu(M) = \{0\}$.

For the monoid $M$ there exist an abelian group $\text{gp}(M)$ and a monoid homomorphism $\iota \colon M \to \text{gp}(M)$ such that any monoid homomorphism $M \to G$, where $G$ is an abelian group, uniquely factors through~$\iota$. The group $\text{gp}(M)$, which is unique up to isomorphism, is called the \emph{Grothendieck group}\footnote{The Grothendieck group of a monoid is often called the difference or the quotient group depending on whether the monoid is written additively or multiplicatively.} of~$M$. The monoid $M$ is \emph{torsion-free} if $nx = ny$ for some $n \in \nn$ and $x,y \in M$ implies that $x = y$. A monoid is torsion-free if and only if its Grothendieck group is torsion-free (see \cite[Section~2.A]{BG09}). If $M$ is torsion-free, then the \emph{rank} of~$M$, denoted by $\rk(M)$, is the rank of the $\zz$-module $\text{gp}(M)$, that is, the dimension of the $\qq$-vector space $\qq \otimes_\zz \text{gp}(M)$.

An equivalence relation $\rho$ on $M$ is called a \emph{congruence} provided that it is compatible with the operation of $M$, that is, for all $x,y,z \in M$ the inclusion $(y,z) \in \rho$ implies that $(x+y, x+z) \in \rho$. The elements of a congruence are called \emph{relations}. Let $\rho$ be a congruence. Clearly, the set $M/\rho$ of congruence classes (i.e., the equivalence classes) naturally turns into a commutative semigroup with identity (it may not be cancellative). The subset $\{(x,x) : x \in M\}$ of $M \times M$ is the smallest congruence of $M$, and is called the \emph{trivial} (or \emph{diagonal}) congruence. Every relation in the trivial congruence is called \emph{diagonal}, while $(0,0)$ is called the \emph{trivial} relation. We say that $\sigma \subseteq M \times M$ generates the congruence $\rho$ provided that $\rho$ is the smallest (under inclusion) congruence on $M$ containing $\sigma$. A congruence on $M$ is \emph{cyclic} if it can be generated by one element.

For $x,y \in M$, we say that $y$ \emph{divides} $x$ \emph{in} $M$ and write $y \mid_M x$ provided that $x = y + y'$ for some $y' \in M$. If $x \mid_M y$ and $y \mid_M x$, then $x$ and $y$ are said to be \emph{associated} elements (or \emph{associates}) and, in this case, we write $x \sim y$. Being associates determines a congruence on $M$, and $M_{\text{red}} := M/\sim$ is called the \emph{reduced monoid} of $M$. When $M$ is reduced, we identify $M_{\text{red}}$ with~$M$. For $S \subseteq M$, we let $\langle S \rangle$ denote the smallest (under inclusion) submonoid of $M$ containing $S$, and we say that $S$ \emph{generates} $M$ if $M = \langle S \rangle$. An element $a \in M \setminus \uu(M)$ is an \emph{irreducible} (or an \emph{atom}) if for each pair of elements $u,v \in M$ such that $a = u + v$ either $u \in \uu(M)$ or $v \in \uu(M)$. We let $\ii(M)$ denote the set of irreducibles of~$M$. The monoid $M$ is called \emph{atomic} if every element in $M \setminus \uu(M)$ can be written as a sum of atoms. Clearly,~$M$ is atomic if and only if $M_{\text{red}}$ is atomic. Each finitely generated monoid is atomic \cite[Proposition~2.7.8]{GH06}.

\smallskip
%%%%%%%%%%%%%%
\subsection{Factorizations}

The free commutative monoid on the set $\ii(M_{\text{red}})$ is denoted by $\mathsf{Z}(M)$, and the elements of $\mathsf{Z}(M)$ are called \emph{factorizations}. If $z \in \mathsf{Z}(M)$ consists of $\ell$ irreducibles of $M_{\text{red}}$ (counting repetitions), then we call $\ell$ the \emph{length} of $z$ and write $|z| := \ell$. We say that $a \in \ii(M)$ \emph{appears} in $z$ provided that $a + \uu(M)$ is one of the $\ell$ irreducibles of $z$. The unique monoid homomorphism $\pi_M \colon \mathsf{Z}(M) \to M_{\text{red}}$ satisfying $\pi(a) = a$ for all $a \in \ii(M_{\text{red}})$ is called the \emph{factorization homomorphism} of $M$. When there seems to be no risk of ambiguity, we write $\pi$ instead of $\pi_M$. The kernel
\[
	\ker \pi := \{(z,z') \in \mathsf{Z}(M)^2 : \pi(z) = \pi(z') \}
\]
of~$\pi$ is a congruence on $\mathsf{Z}(M)$, which we call the \emph{factorization congruence} of~$M$. In addition, we call an element $(z,z') \in \ker \pi$ a \emph{factorization relation}. Let $(z,z')$ be a factorization relation of $M$. We say that $a \in \ii(M)$ \emph{appears} in $(z,z')$ if $a$ appears in either $z$ or $z'$. We call $(z,z')$ \emph{balanced} if $|z| = |z'|$ and \emph{unbalanced} otherwise. Also, we say that $(z,z')$ is \emph{irredundant} provided that no irreducible of $M$ appears in both $z$ and $z'$. For each $x \in M$ we set
\[
	\mathsf{Z}(x) := \mathsf{Z}_M(x) := \pi^{-1}(x + \uu(M)) \subseteq \mathsf{Z}(M).
\]
Observe that $\mathsf{Z}(u) = \{0\}$ if and only if $u \in \uu(M)$. In addition, note that $M$ is atomic if and only if $\pi$ is surjective, that is $\mathsf{Z}(x) \neq \emptyset$ for all $x \in M$. For each $x \in M$, we set
\[
	\mathsf{L}(x) := \mathsf{L}_M(x) := \{|z| : z \in \mathsf{Z}(x)\} \subset \nn_0.
\]
The monoid $M$ is called a \emph{factorial monoid} (or a \emph{unique factorization monoid}) if $|\mathsf{Z}(x)| = 1$ for all $x \in M$. On the other hand, $M$ is called a \emph{half-factorial monoid} if $|\mathsf{L}(x)| = 1$ for all $x \in M$. Let~$R$ be an integral domain (i.e., a commutative ring with identity and without nonzero zero-divisors). We let $R^\bullet$ denote the multiplicative monoid $R \setminus \{0\}$ and, to simplify notation, we write $\pi_R$ and $\mathsf{Z}(R)$ instead of $\pi_{R^\bullet}$ and $\mathsf{Z}(R^\bullet)$, respectively. In addition, for each $x \in R^\bullet$, we set $\mathsf{Z}_R(x) := \mathsf{Z}_{R^\bullet}(x)$ and $\mathsf{L}_R(x) := \mathsf{L}_{R^\bullet}(x)$. It is clear that $R$ is atomic (resp., a unique factorization domain) if and only if the monoid $R^\bullet$ is atomic (resp., factorial). We say that $R$ is a \emph{half-factorial domain} provided that $R^\bullet$ is a half-factorial monoid. See~\cite{CC00} for a survey on half-factorial domains.

The notion of a half-factorial monoid is therefore obtained from that of a factorial monoid by keeping the existence and weakening the uniqueness of factorizations, i.e., replacing $|\mathsf{Z}(x)| = 1$ by $|\mathsf{L}(x)| = 1$ for every $x \in M$. In~\cite{CS11} the second and fourth authors proposed a dual way to weaken the unique factorization property and obtain a natural relaxed version of a factorial monoid, which they called a length-factorial monoid.

\begin{definition}
	Let $M$ be an atomic monoid. We say that $M$ is \emph{length-factorial} if for all $x \in M$ and $z_1, z_2 \in \mathsf{Z}(x)$ the equality $|z_1| = |z_2|$ implies that $z_1 = z_2$.
\end{definition}

Before proceeding, we make the following observation.

\begin{remark}
	The term ``length-factorial" seems like a natural choice as for every element $x$ of a length-factorial monoid $M$ and every $\ell \in \mathsf{L}(x)$ there is a unique factorization in $\mathsf{Z}(x)$ of length $\ell$. We emphasize, however, that the monoids we study here under the term ``length-factorial monoids" were first investigated in~\cite{CS11} under the term ``other-half-factorial monoids"; observe that the later term highlights the contrast with the half-factorial property.
\end{remark}

Notice that a monoid is length-factorial if and only if its reduced monoid is length-factorial. It is clear that every factorial monoid is a length-factorial monoid. We say that a length-factorial monoid is \emph{proper} if it is not factorial. The study of length-factoriality will be our primary focus of attention here. It has been proved in~\cite{CS11} that the multiplicative monoid of an integral domain is a length-factorial monoid if and only if the integral domain is a unique factorization domain, i.e., the multiplicative monoid of an integral domain cannot be a proper length-factorial monoid. We will obtain this result, along with several additional fundamental results, as a consequence of our investigation.

\bigskip
%%%%%%%%%%%%%%%%%%%%%%%%%%%%
%%%%%%%%%%%%%%%%%%%%%%%%%%%%
\section{Characterizations of Length-factorial Monoids}
\label{sec:general OHFMs}

The main purpose of this section is to provide characterizations of a proper length-factorial monoid in terms of the integral dependence of its set of irreducibles and also in terms of its factorization congruence. We will use the established characterizations to describe the set of Betti elements and study the catenary degree of a given length-factorial monoid. Throughout this section, we assume that $M$ is an atomic monoid.

\smallskip
%%%%%%%%%%%%%%%%%%%%%%%%%%%%%%
\subsection{Characterizations of a Length-factorial Monoid}

The notion of integral independence plays a central role in our first characterization of a length-factorial monoid. Let $S$ be a subset of $M$. We say that $S$ is \emph{integrally independent} in $M$ if $S$ is  linearly independent as a subset of the $\zz$-module~$\gp(M)$, that is, for any distinct $s_1, \dots, s_n \in S$ and any $c_1, \dots, c_n \in \zz$ the equality $\sum_{i=1}^n c_i s_i = 0$ in $\gp(M)$ implies that $c_i = 0$ for every $i \in \ldb 1,n \rdb$. We proceed to establish two characterizations of proper length-factorial monoids.

\begin{theorem} \label{thm:general characterization of OHFM}
	Let $M$ be an atomic monoid that is not a factorial monoid. Then the following statements are equivalent.
	\begin{enumerate}
		\item[(a)] The monoid $M$ is a length-factorial monoid.
		\smallskip
		
		\item[(b)] There exists $a \in \ii(M_{\emph{red}})$ such that $\ii(M_{\emph{red}}) \setminus \{a\}$ and $a - \ii(M_{\emph{red}}) \setminus \{a\}$ are integrally independent sets in $\emph{\gp}(M_{\emph{red}})$.
		\smallskip
		
		\item[(c)] The congruence $\ker \pi$ is nontrivial and cyclic.
	\end{enumerate}
\end{theorem}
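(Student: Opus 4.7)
The plan is to reduce to the case $M = M_{\text{red}}$ (all three conditions are invariants of the reduced monoid) and translate everything into linear algebra over $\zz$. Setting $A := \ii(M)$, identifying $\mathsf{Z}(M)$ with the free commutative monoid $\nn_0^A$, and defining $K := \ker(\gp(\pi) \colon \zz^A \to \gp(M))$ together with the length functional $\ell \colon \zz^A \to \zz$, $\ell(k) := \sum_{a \in A} k_a$, the hinge of the argument is the preliminary equivalence
\[
	M \text{ is length-factorial} \iff K \cap \ker \ell = \{0\}.
\]
To see this, write any $w \in K$ as $w = w^+ - w^-$ with $w^\pm \in \nn_0^A$ of disjoint support: then $(w^-, w^+) \in \ker \pi$, $|w^\pm| = \ell(w^\pm)$, and $w \in \ker \ell$ if and only if $|w^+| = |w^-|$, so a nonzero element of $K \cap \ker \ell$ is precisely a pair of distinct equal-length factorizations of a single element.

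Assuming (a), the lemma together with $K \neq 0$ (from non-factoriality) forces $\ell|_K$ injective, so $K$ is infinite cyclic. Picking a generator $k_0$ with $\ell(k_0) > 0$ and writing $k_0 = z_2 - z_1$ via disjoint support produces a canonical pair $(z_1, z_2) \in \ker \pi$ with $|z_1| < |z_2|$; this pair is the engine for both (b) and (c). For (a) $\Rightarrow$ (b), I would pick any $a \in \supp(z_2)$: a $\zz$-linear relation among $\ii(M) \setminus \{a\}$ produces some $w \in K$ with $w_a = 0$, but every nonzero integer multiple of $k_0$ has $w_a \neq 0$ (since $k_0(a) = z_2(a) > 0$), so the relation is trivial; a $\zz$-linear relation $\sum_{b \neq a} c_b(a - b) = 0$ produces some $w \in K$ with $\ell(w) = 0$, hence $w = 0$ by the lemma, forcing all $c_b = 0$.

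For (a) $\Rightarrow$ (c), I would show that the congruence $\rho$ on $\mathsf{Z}(M)$ generated by the single pair $(z_1, z_2)$ coincides with $\ker \pi$. The containment $\rho \subseteq \ker \pi$ is immediate. For the reverse, given $(w_1, w_2) \in \ker \pi$ with $w_2 - w_1 = c k_0$ (WLOG $c \geq 0$), the disjoint-support form forces $w_1 \geq c z_1$ coordinate-wise: on $\supp(z_1)$ the identity $w_2 = w_1 - c z_1 + c z_2$ combined with $w_2 \geq 0$ gives the inequality, while outside $\supp(z_1)$ the inequality is vacuous. Setting $y := w_1 - c z_1 \in \nn_0^A$, the chain
\[
	y + c z_1,\quad y + (c-1) z_1 + z_2,\quad \ldots,\quad y + c z_2 = w_2
\]
remains in $\nn_0^A$ throughout, and each consecutive pair is a single application of $(z_1, z_2)$, placing $(w_1, w_2) \in \rho$. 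I expect this coordinate-wise inequality together with the chain construction built on it to be the main technical step; without the disjoint-support choice of generator the intermediate monomials may leave $\nn_0^A$ and the chain breaks.

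The converses are short applications of the lemma. For (b) $\Rightarrow$ (a), given $w \in K \cap \ker \ell$ and the witness $a$ from (b), the identity $\ell(w) = 0$ substitutes $w_a = -\sum_{b \neq a} w_b$ into $0 = \gp(\pi)(w) = w_a \cdot a + \sum_{b \neq a} w_b \cdot b$ and rearranges to $\sum_{b \neq a} w_b(b - a) = 0$, killed by the integral independence of $\{a - b : b \neq a\}$; then $w_a = 0$ too. For (c) $\Rightarrow$ (a), a generating pair $(z_1, z_2)$ of $\ker \pi$ taken in disjoint-support form yields $K = \zz(z_2 - z_1)$; since $\ker \pi$ is nontrivial we have $K \neq 0$, and the generator must be unbalanced (a balanced non-diagonal generator would place $K \subseteq \ker \ell$, so every factorization relation of $M$ would be balanced, contradicting that length-factoriality in the non-factorial setting is precisely the disjointness $K \cap \ker \ell = 0$), making $\ell|_K$ injective and $K \cap \ker \ell = 0$ by the lemma.
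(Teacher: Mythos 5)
Your linear-algebraic packaging --- identifying $\mathsf{Z}(M)$ with $\nn_0^{\ii(M)}$, passing to $K=\ker(\gp(\pi))$ and the length functional $\ell$, and reducing length-factoriality to $K\cap\ker\ell=\{0\}$ --- is sound, and it is essentially a systematized version of what the paper does by hand: where you split $w\in K$ into $w^+-w^-$ with disjoint supports, the paper splits coefficients $n_i$ into $\tfrac12(|n_i|\pm n_i)$; where you note that injectivity of $\ell|_K$ forces $K$ to be infinite cyclic with a canonical irredundant generator, the paper works with the annihilator of $a+S$ in $\gp(M)/S$. Your implications (a)~$\Rightarrow$~(b), (b)~$\Rightarrow$~(a), and (a)~$\Rightarrow$~(c) are correct; in particular, the coordinatewise inequality $w_1\ge c\,z_1$ and the resulting chain is a clean substitute for the paper's gcd argument in $\mathsf{Z}(M)$, and your observation that (b)~$\Rightarrow$~(a) needs only the independence of $a-\ii(M)\setminus\{a\}$ is a small genuine sharpening.

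The one real gap is in (c)~$\Rightarrow$~(a), at the parenthetical claim that the generator must be unbalanced. As written the argument is circular: the ``contradiction'' you reach from a balanced generator is that $M$ fails to be length-factorial, which is the conclusion you are trying to prove, not a hypothesis you may use. Moreover, the gap cannot be closed from (c) as literally stated: a nontrivial cyclic $\ker\pi$ can be generated by a balanced relation. For instance, the submonoid of $\zz^3$ generated by $a=(1,0,0)$, $b=(0,1,0)$, $c=(0,0,1)$, $d=(1,1,-1)$ is reduced and atomic with exactly these four irreducibles, its kernel $K$ is $\zz(e_c+e_d-e_a-e_b)$, and your own chain argument shows $\ker\pi$ is generated by the single balanced pair $(e_a+e_b,\,e_c+e_d)$; yet this monoid is half-factorial and not factorial, hence not length-factorial. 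You are in the same position as the paper here: its proof of (c)~$\Rightarrow$~(a) silently begins by assuming the cyclic congruence is ``generated by an unbalanced irredundant factorization relation,'' i.e., it proves the implication only under the strengthened reading of (c) that appears explicitly in Theorem~\ref{thm:PLSM of rank 2}(c). Under that reading your argument closes at once --- $\ell(z_2-z_1)\neq 0$ makes $\ell$ injective on $K=\zz(z_2-z_1)$, so $K\cap\ker\ell=\{0\}$ and your lemma applies --- so you should either adopt that hypothesis explicitly or flag that condition (c) must be so interpreted.
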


\begin{proof}
	Since $M$ is a length-factorial monoid if and only if $M_{\text{red}}$ is a length-factorial monoid and since the factorization homomorphisms of both $M$ and $M_{\text{red}}$ are the same, there is no loss in assuming that~$M$ is a reduced monoid. Accordingly, we identify $M_{\text{red}}$ with $M$.
	\smallskip
	
	(a) $\Rightarrow$ (b): Assume that $M$ is a length-factorial monoid. Observe that the set $\ii(M)$ cannot be integrally independent as, otherwise, $M$ would be a factorial monoid. Then there exist $a \in \ii(M)$ and $m \in \nn$ such that
	\begin{equation} \label{eq:linearly combination for ma}
		ma = \sum_{i=1}^k m_i a_i
	\end{equation}
	for some $a_1, \dots, a_k \in \ii(M) \setminus \{a\}$ and $m_1, \dots, m_k \in \zz$. Let us verify that $\ii(M) \setminus \{a\}$ is an integrally independent set in $\gp(M)$. Suppose, for the sake of a contradiction, that this is not the case. Then there exist $b \in \ii(M) \setminus \{a\}$ and $n \in \nn$ satisfying
	\begin{equation} \label{eq:linearly combination for nb}
		nb = \sum_{i=1}^\ell n_i b_i
	\end{equation}
	for some $b_1, \dots, b_\ell \in \ii(M) \setminus \{a, b\}$ and $n_1, \dots, n_\ell \in \zz$. Take $c_i = \frac{1}{2}(|m_i| - m_i)$ and $c'_i = \frac{1}{2}(|m_i| + m_i)$ for every $i \in \ldb 1, k \rdb$, and also take $d_i = \frac{1}{2}(|n_i| - n_i)$ and $d'_i = \frac{1}{2}(|n_i| + n_i)$ for every $i \in \ldb 1, \ell \rdb$. Then set
	\[
		z_1 := ma + \sum_{i=1}^k c_i a_i, \quad z_2 := \sum_{i=1}^k c'_i a_i, \quad  w_1 := nb + \sum_{j=1}^\ell d_j b_j, \quad \text{ and } \quad w_2 := \sum_{j=1}^\ell d'_j b_j.
	\]
	It follows from~(\ref{eq:linearly combination for ma}) and~(\ref{eq:linearly combination for nb}) that both $(z_1, z_2)$ and $(w_1, w_2)$ are irredundant factorization relations of~$M$. Because $(z_1, z_2)$ and $(w_1, w_2)$ are irredundant and nontrivial, the length-factoriality of $M$ guarantees that they are both unbalanced. Assume, without loss of generality, that $|z_1| > |z_2|$ and $|w_1| < |w_2|$. Clearly, $((|w_2| - |w_1|)z_1, (|w_2| - |w_1|)z_2)$ and $((|z_1| - |z_2|)w_1, (|z_1| - |z_2|)w_2)$ are both factorization relations of~$M$. By adding them, one can produce a new balanced factorization relation with exactly one of its two factorization components involving the irreducible $a$. However, this contradicts that $M$ is a length-factorial monoid. Thus, $\ii(M) \setminus \{a\}$ is integrally independent in $\gp(M)$.
	\smallskip
	
	Let $a \in \ii(M)$ be as in the previous paragraph. We proceed to argue that the set $a - \ii(M) \setminus \{a\}$ is also integrally independent in $\gp(M)$. Take this time $b_1, \dots, b_\ell \in \ii(M) \setminus \{a\}$ and $n_1, \dots, n_\ell \in \zz$ such that $\sum_{i=1}^\ell n_i (b_i - a) = 0$. Then set $d_i = \frac{1}{2} (|n_i| - n_i)$ and $d'_i = \frac{1}{2}(|n_i| + n_i)$ for every $i \in \ldb 1,\ell \rdb$, and consider the factorizations
	\[
		z_1 := \sum_{i=1}^\ell d_i b_i + \bigg( \sum_{i=1}^\ell d'_i \bigg) a \quad \text{ and } \quad z_2 := \sum_{i=1}^\ell d'_i b_i + \bigg( \sum_{i=1}^\ell d_i \bigg) a.
	\]
	The equality $\sum_{i=1}^\ell n_i b_i = \big( \sum_{i=1}^\ell n_i \big) a$ ensures that $(z_1, z_2)$ is a balanced factorization relation. Since~$M$ is a length-factorial monoid, $z_1 = z_2$ and therefore $n_i = d_i - d'_i = 0$ for every $i \in \ldb 1, \ell \rdb$. As a consequence, we can conclude that $a - \ii(M) \setminus \{a\}$ is an integrally independent set in $\gp(M)$.
	\smallskip
	
	(b) $\Rightarrow$ (c): Suppose that there exists $a \in \ii(M)$ such that both $\ii(M) \setminus \{a\}$ and $a - \ii(M) \setminus \{a\}$ are integrally independent sets in $\gp(M)$. Let $S$ be the subgroup of $\gp(M)$ generated by $\ii(M) \setminus \{a\}$. We have seen before that $\ii(M)$ is an integrally dependent set. As a result, the annihilator $\text{Ann}(a + S)$ of $a + S$ in the $\zz$-module $\gp(M)/S$ is not trivial. Since $\text{Ann}(a + S)$ is an additive subgroup of $\zz$, there exists $m \in \nn$ such that $\text{Ann}(a + S) = m \zz$. Then there is an irredundant factorization relation $(w_1,w_2) \in \ker \pi$ such that exactly $m$ copies of $a$ appear in $w_1$ and no copies of $a$ appear in $w_2$.
	
	Let us verify that $(w_1,w_2)$ is unbalanced. Suppose, by way of contradiction, that $|w_1| = |w_2|$. Note that $\pi(w_1) - \pi(w_2) = 0$ in $\gp(M)$ ensures the existence of $a_0, \dots, a_k \in \ii(M)$ (with $a_0 = a$) and $m_0, \dots, m_k \in \zz$ (with $m_0 = m$) such that $\sum_{i=0}^k m_i a_i = 0$. As $|w_1| = |w_2|$, the equality $\sum_{i=0}^k m_i = 0$ holds. As a consequence, one finds that
	\[
		\sum_{i=1}^k m_i (a - a_i) = a \sum_{i=0}^k m_i - \sum_{i=0}^k m_i a_i = 0.
	\]
	This, along with the fact that $m_i \neq 0$ for some $i \in \ldb 1,k \rdb$, contradicts that $a - \ii(M) \setminus \{a\}$ is an integrally independent set. Hence $|w_1| \neq |w_2|$, and so $(w_1, w_2)$ is unbalanced.
	
	We still need to show that $(w_1,w_2)$ generates the congruence $\ker \pi$. Towards this end, take a nontrivial irredundant factorization relation $(z_1, z_2) \in \ker \pi$. As $\ii(M) \setminus \{a\}$ is integrally independent, $a$ must appear in $(z_1, z_2)$. Assume, without loss of generality, that exactly $n$ copies of $a$ appear in $z_1$ for some $n \in \nn$. Then the equality $\pi(z_1) = \pi(z_2)$ ensures that $n \in \text{Ann}(a + S)$, and so $n = km$ for some $k \in \nn$. Then after canceling $na$ in both sides of $\pi(w_1^k z_2) = \pi(w_2^k z_1)$, we obtain two integral combinations of irreducibles in $\ii(M) \setminus \{a\}$, whose corresponding coefficients must be equal. Thus, $(z_1, z_2) = (w_1, w_2)^k$.
	\smallskip
	
	(c) $\Rightarrow$ (a): Suppose that $\ker \pi$ is a cyclic congruence generated by an unbalanced irredundant factorization relation $(w_1, w_2)$. Let $*$ denote the monoid operation of the congruence $\ker \pi$. Take $(z, z') \in \ker \pi$ such that $z \neq z'$. Since $(w_1, w_2)$ generates $\ker \pi$, there exist $n \in \nn$ and $z_0, \dots, z_n \in \mathsf{Z}(M)$ with $z_0 = z$ and $z_n = z'$ such that for every $i \in \ldb 1,n \rdb$ the equality
	\begin{equation} \label{eq:congruence identities}
		(z_{i-1}, z_i) = (w_1, w_2) \ast (d_i, d_i)
	\end{equation}
	holds for some $d_i \in \mathsf{Z}(M)$. After multiplying all the identities in~(\ref{eq:congruence identities}) (for every $i \in \ldb 1,n \rdb$), one finds that $(z, z') \ast (z_1 \cdots z_{n-1}, z_1 \cdots z_{n-1}) = (w_1^n, w_2^n) \ast (d,d)$, where $d = d_1 \cdots d_n$. Since $z_1 \cdots z_{n-1}$ divides both $w_1^n d$ and $w_2^n d$ in the free monoid $\mathsf{Z}(M)$ and $\gcd(w_1^n, w_2^n) = 1$, there exists $z'' \in \mathsf{Z}(M)$ such that $z_1 \cdots z_{n-1} z'' = d$. As a result, $(z, z') = (z'' w_1^n, z'' w_2^n)$ and so $(z, z')$ is an unbalanced factorization relation. Hence $M$ is a length-factorial monoid.
\end{proof}

Following~\cite{CS11}, we call a factorization relation $(w_1,w_2)$ in $\ker \pi_M$ \emph{master} if any irredundant and unbalanced factorization relation of $M$ has the form $(w_1^n, w_2^n)$ or $(w_2^n, w_1^n)$ for some $n \in \nn$. A master factorization relation must be irredundant and unbalanced unless $M$ is a half-factorial monoid. When~$M$ is a proper length-factorial monoid we have seen that $\ker \pi$ is a nontrivial cyclic congruence, and it is clear that $(w_1, w_2)$ is a generator of $\ker \pi$ if and only if $(w_1, w_2)$ is a master factorization relation, in which case, the only master factorization relations of~$M$ are $(w_1, w_2)$ and $(w_2, w_1)$. In this case, one can readily verify that if $|w_1| < |w_2|$, then $|w_1| < |z| < |w_2|$ for each factorization $z \in \mathsf{Z}(\pi(w_1)) \setminus \{w_1, w_2\}$. As a consequence of Theorem~\ref{thm:general characterization of OHFM}, we obtain the following corollary, which was first established in the proof of the main theorem of~\cite{CS11}.

\begin{cor} \label{cor:OHFM and MF}
	Let $M$ be an atomic monoid. Then $M$ is a proper length-factorial monoid if and only if it admits an unbalanced master factorization relation $(w_1, w_2)$, in which case the only master factorization relations of $M$ are $(w_1, w_2)$ and $(w_2,  w_1)$.
\end{cor}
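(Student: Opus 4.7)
The plan is to reduce the corollary to Theorem~\ref{thm:general characterization of OHFM} via the simple observation that, when $\ker\pi$ is cyclic, a generator of it coincides with a master factorization relation. The non-routine part will be the converse direction: starting from an unbalanced master relation, one must rule out balanced nontrivial irredundant relations in order to conclude length-factoriality.

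For the forward implication, I will assume $M$ is proper length-factorial. Then Theorem~\ref{thm:general characterization of OHFM}, (a)$\Leftrightarrow$(c), gives that $\ker\pi$ is nontrivial and cyclic, so I can fix a generator $(w_1,w_2)$; length-factoriality forces $(w_1,w_2)$ to be unbalanced, since it is nontrivial. To see that $(w_1,w_2)$ is a master, I will take an arbitrary irredundant unbalanced $(z_1,z_2)\in\ker\pi$ and reuse the cyclic-congruence argument from the proof of (c)$\Rightarrow$(a) of Theorem~\ref{thm:general characterization of OHFM} to express $(z_1,z_2)=(d+w_1^n,d+w_2^n)$ (or with $w_1,w_2$ swapped) for some $d\in\mathsf{Z}(M)$ and $n\in\nn$; irredundancy then forces $d=0$, giving the required form.

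For the converse direction, suppose $M$ admits an unbalanced master relation $(w_1,w_2)$. This immediately implies $M$ is not half-factorial, hence not factorial. The main obstacle is to prove length-factoriality, i.e., to show that every nontrivial irredundant $(z_1,z_2)\in\ker\pi$ is unbalanced. Assuming without loss of generality that $|w_1|<|w_2|$ and, for contradiction, that $|z_1|=|z_2|$, I will consider the combined relation $(z_1+w_1,z_2+w_2)$, whose length difference is $|w_1|-|w_2|$. Cancelling the common irreducibles from its two components yields an irredundant unbalanced relation with the same length difference, which by the master property of $(w_1,w_2)$ must equal $(w_1^n,w_2^n)$ for some $n\in\nn$, and matching length differences forces $n=1$. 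Hence there exists $D\in\mathsf{Z}(M)$ with $z_1+w_1=w_1+D$ and $z_2+w_2=w_2+D$, and cancellation in the free monoid $\mathsf{Z}(M)$ gives $z_1=D=z_2$, the desired contradiction. Length-factoriality now follows from Theorem~\ref{thm:general characterization of OHFM}.

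Finally, for the uniqueness assertion, I will note that $(w_2,w_1)$ is evidently a master whenever $(w_1,w_2)$ is. Conversely, if $(w_1',w_2')$ is any unbalanced master of $M$, then the master property of $(w_1,w_2)$ applied to $(w_1',w_2')$ forces $(w_1',w_2')=(w_1^n,w_2^n)$ or $(w_2^n,w_1^n)$ for some $n\in\nn$, and the symmetric application of the master property of $(w_1',w_2')$ to $(w_1,w_2)$ forces the exponent to be $1$, so $(w_1',w_2')\in\{(w_1,w_2),(w_2,w_1)\}$.
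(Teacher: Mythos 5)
Your proof is correct and follows essentially the same route as the paper: both directions reduce to Theorem~\ref{thm:general characterization of OHFM} via the identification of master factorization relations with generators of the cyclic congruence $\ker \pi$. The only difference is that the paper treats the converse direction (unbalanced master implies proper length-factorial) as immediate from the preceding discussion, whereas you supply an explicit and valid argument for it by multiplying a hypothetical balanced irredundant relation by $(w_1, w_2)$ and reducing to force triviality.
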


The numerical monoids that are proper length-factorial monoids have been characterized in~\cite{CS11} as those having precisely two irreducibles. This was generalized in~\cite[Proposition~4.3]{fG20a}, which states that the additive submonoids of $\qq_{\ge 2}$ that are length-factorial monoids are those generated by two elements. In general, every monoid that can be generated by two elements is a length-factorial monoid.

\begin{cor} \label{cor:embedding dimension 2 implies cyclic kernel}
	Let $M$ be a monoid generated by two elements. Then $\ker \pi$ is cyclic, and $M$ is a length-factorial monoid.
\end{cor}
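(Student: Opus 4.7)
Since length-factoriality and the congruence $\ker \pi$ are both preserved under passage to $M_{\text{red}}$, I may assume $M$ is reduced. If $s_1, s_2$ generate $M$, then any atom $a = c_1 s_1 + c_2 s_2$ with $c_1 + c_2 \geq 2$ would admit a decomposition into two non-unit summands, contradicting atomicity; hence every atom of $M$ lies in $\{s_1, s_2\}$, giving at most two atoms. If $M$ has zero or one atom, then $M$ is trivial or free on a single atom, hence factorial, and so $\ker \pi$ is the diagonal congruence (cyclic) and length-factoriality holds automatically.

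In the remaining case $\ii(M) = \{a, b\}$, I would extend the factorization homomorphism $\pi \colon \nn_0^2 \to M$ to the surjection $\phi \colon \zz^2 \to \gp(M)$ and set $K := \ker \phi$. If $K = \{0\}$, then $\pi$ is injective, $M$ is factorial, and the conclusion follows trivially. Otherwise, $K$ must have rank exactly $1$: if it had rank $2$, then $\gp(M) \cong \zz^2/K$ would be finite, and the embedding $M \hookrightarrow \gp(M)$ (from cancellativity) would force $M$ itself to be a finite cancellative monoid, hence a group, contradicting the presence of atoms in a reduced monoid. Thus $K = \zz(m, -n)$ for some $(m, -n) \in \zz^2$, and a short sign analysis using that $a, b$ are non-units lets me take $m, n \geq 1$.

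Finally, I would verify that $\ker \pi$ coincides with the congruence on $\nn_0^2$ generated by the single pair $((m, 0), (0, n))$. One inclusion holds because $ma = nb$. For the converse, any $((p_1, q_1), (p_2, q_2)) \in \ker \pi$ satisfies $(p_1 - p_2, q_1 - q_2) = k(m, -n)$ for some $k \in \zz$, and $|k|$ applications of the generating relation (translated by a common base factorization) connect the two. Hence $\ker \pi$ is cyclic, and Theorem~\ref{thm:general characterization of OHFM}(c) $\Rightarrow$ (a) then yields that $M$ is length-factorial. I expect the main obstacle to be confirming that the generating pair is unbalanced (as needed for Theorem~\ref{thm:general characterization of OHFM} to apply): the case $m = n$ together with primitivity of $(m, -n)$ in $K$ would force $a = b$ in $\gp(M)$ and hence in the cancellative $M$, contradicting the distinctness of the two atoms.
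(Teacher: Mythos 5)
Your argument takes a genuinely different route from the paper's. The paper disposes of the corollary in one line by checking condition (b) of Theorem~\ref{thm:general characterization of OHFM}: with $\ii(M) = \{a,b\}$, both $\ii(M)\setminus\{a\}$ and $a - \ii(M)\setminus\{a\}$ are singletons, hence (so the paper asserts) integrally independent. You instead verify condition (c) directly, by computing the kernel lattice $K = \ker\big(\zz^2 \to \gp(M)\big)$, showing it has rank at most one, and exhibiting an explicit generator $((m,0),(0,n))$ of the congruence $\ker\pi$. Up to that point your proof is correct, and in fact more informative than the paper's, since it produces the master factorization relation explicitly.

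The final step, however, contains a genuine error. You correctly identify that invoking (c) $\Rightarrow$ (a) requires the generating relation to be unbalanced, i.e.\ $m \neq n$, but your proposed resolution does not work: primitivity of $(m,-n)$ \emph{as a generator of $K$} only says that $(m,-n)$ is not a proper multiple of another element of $K$; it does not place $(1,-1)$ in $K$, so $m(a-b) = 0$ in $\gp(M)$ does not force $a = b$ unless $\gp(M)$ is torsion-free. Concretely, let $M$ be the submonoid of $(\zz/2\zz) \times \nn_0$ generated by $a = (\bar 0, 1)$ and $b = (\bar 1, 1)$. This monoid is cancellative, reduced, and has exactly these two (distinct) atoms, yet $K = \zz\cdot(2,-2)$, so $m = n = 2$: the congruence $\ker\pi$ is cyclic but its generator $((2,0),(0,2))$ is balanced, and $M$ is not length-factorial because $2a$ and $2b$ are distinct equal-length factorizations of $(\bar 0, 2)$. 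Thus the gap cannot be closed without a torsion-freeness hypothesis (which the paper imposes in Theorem~\ref{thm:PLSM of rank 2} but not here). It is worth noting that the paper's own one-line verification of (b) silently skips the same point: the singleton $\{a-b\}$ fails to be integrally independent precisely when $a - b$ is a nonzero torsion element of $\gp(M)$, which is what happens in the example above.
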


\begin{proof}
	As $M$ is finitely generated, it is atomic. We can assume, without loss of generality, that~$M$ is reduced. If $M$ is a factorial monoid, then there is nothing to show. Therefore assume that~$M$ is not a factorial monoid. Then there exists a generating set $A$ of $M$ with $|A| = 2$. Because~$M$ is not a factorial monoid, $\ii(M) = A$. As both sets $A \setminus \{a\}$ and $a - A \setminus \{a\}$ are singletons, the corollary follows from Theorem~\ref{thm:general characterization of OHFM}.
\end{proof}

When a monoid cannot be generated by two elements, its factorization congruence may not be cyclic (even if the monoid is finitely generated). The next example illustrates this observation.

\begin{example}
	For $n \in \nn_{\ge 3}$, consider the additive submonoid $M = \{0\} \cup \nn_{\ge n}$ of $\nn_0$. It can be readily verified that $M$ is atomic and $\ii(M) = \ldb n, 2n-1 \rdb$. Since $2(n+1) = n + (n+2)$, it follows that $M$ is not a length-factorial monoid. Then Theorem~\ref{thm:general characterization of OHFM} guarantees that the factorization congruence of $M$ is not cyclic.
\end{example}

\smallskip
%%%%%%%%%%%%%%%%%%%%%%%%%%
\subsection{Connection with the Catenary Degree} 

We call a finite sequence $z_0, z_1, \dots, z_k$ of factorizations in $\mathsf{Z}(M)$ a \emph{chain of factorizations} from $z_0$ to $z_k$ if $\pi(z_0) = \pi(z_1) = \dots = \pi(z_k)$, where $\pi$ is the factorization homomorphism of $M$. Consider the subset $\mathcal{R}$ of $\mathsf{Z}(M)^2$ defined as follows: a pair $(z,z') \in \mathsf{Z}(M)^2$ belongs to $\mathcal{R}$ if there exists a chain of factorizations $z_0, z_1, \dots, z_k$ from $z$ to $z'$ such that $\text{gcd}(z_{i-1}, z_i) \neq 1$ for every $i \in \ldb 1,k \rdb$, where $\text{gcd}(z_{i-1},z_i)$ denotes the greatest common divisor of $z_{i-1}$ and $z_i$ as elements of the free commutative monoid $\mathsf{Z}(M)$. It follows immediately that $\mathcal{R}$ is an equivalence relation on $\mathsf{Z}(M)$ that refines $\ker \pi$. For each $x \in M$, we let $\mathcal{R}_x$ denote the set of equivalence classes of $\mathcal{R}$ inside $\mathsf{Z}(x)$. An element $b \in M$ is called a \emph{Betti element} provided that $|\mathcal{R}_x| \ge 2$. Let $\text{Betti}(M)$ denote the set of Betti elements of $M$. As we proceed to show, every proper length-factorial monoid contains essentially one Betti element.

\begin{prop} \label{prop:Betti elements}
	If $M$ is a proper length-factorial monoid, then $|\emph{Betti}(M_{\emph{red}})| = 1$.
\end{prop}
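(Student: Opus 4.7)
The plan is to reduce to $M_{\text{red}}$ (which is also a proper length-factorial monoid with the same factorization data) and then exploit the cyclic structure of $\ker \pi$ provided by Theorem~\ref{thm:general characterization of OHFM}(c). Let $(w_1,w_2)$ be a master factorization relation, so in particular $(w_1,w_2)$ is irredundant and unbalanced and, as established in the proof of (c) $\Rightarrow$ (a), every nontrivial pair $(z,z') \in \ker \pi$ has the form $(z'' w_1^n,\, z'' w_2^n)$ or $(z'' w_2^n,\, z'' w_1^n)$ for some $z'' \in \mathsf{Z}(M_{\text{red}})$ and some $n \in \nn$.

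First I would exhibit one Betti element, namely $b := \pi(w_1) = \pi(w_2)$. Using the normal form above (together with the fact that $w_1, w_2$ share no atoms), one shows by a short case analysis that $\mathsf{Z}(b) = \{w_1, w_2\}$: any factorization $z_2$ of $b$ paired with $w_1$ forces $z'' = 0$ and $n = 1$. Since $(w_1,w_2)$ is irredundant, $\gcd(w_1, w_2) = 1$ in the free monoid $\mathsf{Z}(M_{\text{red}})$, so $w_1$ and $w_2$ lie in different $\mathcal{R}$-classes, giving $|\mathcal{R}_b| = 2$.

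Next, to show uniqueness, suppose $x \in M_{\text{red}}$ is a Betti element and pick $z_1, z_2 \in \mathsf{Z}(x)$ with $z_1 \,\not\mathcal{R}\, z_2$. Write (without loss of generality) $(z_1,z_2) = (z'' w_1^n,\, z'' w_2^n)$ with $n \ge 1$. The key step is to build the explicit chain
\[
    z'' w_1^n,\ z'' w_1^{n-1} w_2,\ z'' w_1^{n-2} w_2^2,\ \dots,\ z'' w_2^n,
\]
which connects $z_1$ to $z_2$ through factorizations of $x$. Because $w_1$ and $w_2$ share no atoms, the greatest common divisor of two consecutive terms equals $z'' w_1^{n-1-k} w_2^k$ for the appropriate $k$, which is nonzero whenever $z'' \neq 0$ or $n \ge 2$. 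In either of those cases this chain witnesses $z_1 \,\mathcal{R}\, z_2$, contradicting the choice of $z_1, z_2$. Hence $z'' = 0$ and $n = 1$, which forces $x = \pi(w_1) = b$. This shows $\text{Betti}(M_{\text{red}}) = \{b\}$.

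The main obstacle I anticipate is being precise about when consecutive terms of the swapping chain fail to have trivial $\gcd$; the irredundancy of the master relation is exactly what is needed here, and without it the chain argument would collapse. Everything else is a direct extraction from the cyclic normal form already available through Theorem~\ref{thm:general characterization of OHFM} and Corollary~\ref{cor:OHFM and MF}.
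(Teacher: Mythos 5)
Your proposal is correct and follows essentially the same route as the paper's proof: exhibit $b = \pi(w_1)$ as a Betti element by showing $\mathsf{Z}(b) = \{w_1, w_2\}$ with the two factorizations in distinct $\mathcal{R}$-classes by irredundancy, and then rule out all other Betti elements via the swapping chain $z'' w_1^{n-i} w_2^i$, whose consecutive terms have nontrivial $\gcd$ unless $z''$ is trivial and $n = 1$. The only difference is cosmetic: you argue the uniqueness step contrapositively, whereas the paper assumes $x \neq b$ directly and shows $|\mathcal{R}_x| = 1$.
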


\begin{proof}
	Since $M$ is a proper length-factorial monoid, Corollary~\ref{cor:OHFM and MF} ensures the existence of a master factorization relation $(w_1, w_2)$. Assume that $|w_1| < |w_2|$. We claim that $b = \pi(w_1)$ is a Betti element. To see this, take $w'_1 \in \mathsf{Z}(b)$ with $w'_1 \neq w_1$. As $w_1$ is the minimum-length factorization of the master relation $(w_1, w_2)$, it follows that $|w_1| < |w'_1|$. Therefore $(w_1, w'_1) = (w w_1^n, w w_2^n)$ for some $w \in \mathsf{Z}(M)$ and $n \in \nn$, which implies that $w=1$ and $n=1$, that is, $w'_1 = w_2$. As a result, $\mathsf{Z}(b) = \{w_1, w_2\}$. This, along with the fact that $(w_1, w_2)$ is irredundant, guarantees that $|\mathcal{R}_b| = 2$. Hence $b \in \text{Betti}(M_{\text{red}})$.
	
	Now take $x \in M_{\text{red}}$ such that $x \neq b$, and let us verify that $x$ cannot be a Betti element of $M_{\text{red}}$. If $|\mathsf{Z}(x)| = 1$, then $|\mathcal{R}_x| = 1$, and so $x \notin \text{Betti}(M_{\text{red}})$. Assume, therefore, that $|\mathsf{Z}(x)| \ge 2$. Take $z, z' \in \mathsf{Z}(x)$ with $z \neq z'$ and suppose, without loss of generality, that $|z| < |z'|$. Then $(z, z') = (w w_1^n, w w_2^n)$ for some $w \in \mathsf{Z}(M)$ and $n \in \nn$. If $w \neq 1$, then $z, z'$ is a chain of factorizations from $z$ to $z'$ such that $\gcd(z,z') \neq 1$. Otherwise, the fact that $x \neq b$ ensures that $n \ge 2$, and after taking $z_i = w_1^{n-i} w_2^i$ for each $i \in \ldb 0,n \rdb$, one can readily see that $z_0, z_1, \dots, z_n$ is a chain of factorizations from $z$ to $z'$ satisfying that $\gcd(z_{i-1}, z_i) \neq 1$ for every $i \in \ldb 1,n \rdb$. Hence $|\mathcal{R}_x| = 1$, and so~$x \notin \text{Betti}(M_{\text{red}})$.
\end{proof}

We will conclude this section studying the (monotone, equal) catenary degree of a length-factorial monoid; we express the (monotone) catenary degree in terms of any of the master factorization relations. The \emph{distance} $\mathsf{d}(z,z')$ between two factorizations $z$ and $z'$ in $\mathsf{Z}(M)$ is defined as follows:
\[
	\mathsf{d}(z,z') := \max \bigg\{ \bigg{|} \, \frac{z}{\gcd(z,z')} \bigg{|}, \, \bigg{|} \frac{z'}{\gcd(z,z')} \bigg{|} \, \bigg\}.
\]
It is routine to verify that $\mathsf{d}$ is indeed a distance function. For $N \in \nn_0$, a chain of factorizations $z_0, z_1, \dots, z_k$ is called an $N$-\emph{chain} from $z_0$ to $z_k$ if $\mathsf{d}(z_{i-1}, z_i) \le N$ for every $i \in \ldb 1,k \rdb$. For $x \in M$, we let $\mathsf{c}(x)$ denote the smallest $N \in \nn_0$ such that for every $z,z' \in \mathsf{Z}(x)$ there exists an $N$-chain of factorizations from $z$ to $z'$; when such an $N$ does not exist, we set $\mathsf{c}(x) = \infty$. The \emph{catenary degree} of $M$, denoted by $\mathsf{c}(M)$, is defined by
\[
	\mathsf{c}(M) := \sup \{\mathsf{c}(x) : x \in M\}.
\]
The notion of catenary degree was introduced by Geroldinger in~\cite{aG91} in the context of Noetherian domains, although the term was coined later in~\cite{aG94}. Since then, several variations of the catenary degree have been investigated.

An $N$-chain $z_0, z_1, \dots, z_k$ of factorizations in $\mathsf{Z}(M)$ is said to be \emph{monotone} if $|z_0| \le |z_1| \le \dots \le |z_k|$ or $|z_0| \ge |z_1| \ge \dots \ge |z_k|$. For $x \in M$, we let $\mathsf{c}_{\text{mon}}(x)$ (resp., $\mathsf{c}_{\text{eq}}(x)$) denote the smallest $N \in \nn_0$ such that for every $z,z' \in \mathsf{Z}(x)$ (resp., $z,z' \in \mathsf{Z}(x)$ with $|z| = |z'|$) there exists a monotone $N$-chain of factorizations from $z$ to $z'$; if such an $N$ does not exist, then we set $\mathsf{c}_{\text{mon}}(x) = \infty$ (resp., $\mathsf{c}_{\text{eq}}(x) = \infty$). In addition, we set
\[
	\mathsf{c}_{\text{mon}}(M) := \sup \{\mathsf{c}_{\text{mon}}(x) : x \in M\} \quad \text{ and } \quad \mathsf{c}_{\text{eq}}(M) := \sup \{\mathsf{c}_{\text{eq}}(x) : x \in M\},
\]
and call them the \emph{monotone catenary degree} and the \emph{equal catenary degree} of $M$, respectively. It is clear from the definition that $\mathsf{c}(x) \le \mathsf{c}_{\text{mon}}(x)$ and $\mathsf{c}_{\text{eq}}(x) \le \mathsf{c}_{\text{mon}}(x)$ for all $x \in M$ and, therefore, $\mathsf{c}(M) \le \mathsf{c}_{\text{mon}}(M)$ and $\mathsf{c}_{\text{eq}}(M) \le \mathsf{c}_{\text{mon}}(M)$. For every $\ell \in \nn_0$ and $x \in M$, set $\mathsf{Z}_\ell(x) := \{z \in \mathsf{Z}(x) : |z| = \ell\}$ and define $\mathsf{c}_{\text{adj}}(x)$ as follows:
\[
	\mathsf{c}_{\text{adj}}(x) := \sup \big\{ \mathsf{d}(\mathsf{Z}_k(x), \mathsf{Z}_\ell(x)) \, : \, k,\ell \in \mathsf{L}(x), \, k < \ell, \text{ and } \ldb k, \ell \rdb \cap \mathsf{L}(x) = \{k,\ell\} \big\},
\]
where $\mathsf{d}(Z_1, Z_2) = \min \{\mathsf{d}(z_1, z_2) : z_1 \in Z_1 \text{ and } z_2 \in Z_2 \}$ for any nonempty subsets $Z_1$ and $Z_2$ of $\mathsf{Z}(M)$. The \emph{adjacent catenary degree} of $M$, denoted by $\mathsf{c}_{\text{adj}}(M)$, is then defined as
\[
	\mathsf{c}_{\text{adj}}(M) := \sup\{\mathsf{c}_{\text{adj}}(x) : x \in M\}.
\]
It is clear that $\mathsf{c}_{\text{mon}}(x) = \max \{ \mathsf{c}_{\text{eq}}(x), \mathsf{c}_{\text{adj}}(x) \}$ for all $x \in M$, and so $\mathsf{c}_{\text{mon}}(M) = \max \{ \mathsf{c}_{\text{eq}}(M), \mathsf{c}_{\text{adj}}(M) \}$.
The notion of monotone catenary degree was introduced by Foroutan in~\cite{aF06}, and it has been fairly studied in past literature (see \cite{GR19} and references therein). In \cite[Section~3]{aP15}, Philipp provides characterizations of the monotone, equal, and adjacent catenary degrees of $M$ in terms of the factorization congruence $\ker \pi$.

\begin{prop}
	Let $M$ be a monoid, and let $(w_1, w_2)$ be a master factorization relation of $M$. Then the following statements hold.
	\begin{enumerate}
		\item The monoid $M$ is length-factorial if and only if $\mathsf{c}_{\emph{eq}}(M) = 0$.
		\smallskip
		
		\item If $M$ is a proper length-factorial monoid, then
		\[
			\mathsf{c}_{\emph{adj}}(M) = \mathsf{c}_{\emph{mon}}(M) = \mathsf{c}(M)  = \max\{|w_1|, |w_2|\}.
		\]
	\end{enumerate}
\end{prop}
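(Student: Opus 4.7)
The plan is to prove the two parts separately, leveraging the cyclicity of $\ker \pi$ from Theorem~\ref{thm:general characterization of OHFM} and Corollary~\ref{cor:OHFM and MF}. For part~(1), I would just unfold the definitions: if $M$ is length-factorial, then for every $x \in M$ the set $\mathsf{Z}(x)$ contains at most one factorization of each length, so the trivial chain witnesses $\mathsf{c}_{\text{eq}}(x) = 0$; conversely, $\mathsf{c}_{\text{eq}}(M) = 0$ forces any two equilength factorizations of the same element to be $0$-apart, hence identical (since $\mathsf{d}(z,z')=0$ iff $z=z'$), which is length-factoriality.

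For part~(2), assume without loss of generality that $|w_1| < |w_2|$ and set $N := |w_2|$. The key structural input is that the proof of (c)$\Rightarrow$(a) in Theorem~\ref{thm:general characterization of OHFM} actually shows a stronger statement: every nontrivial relation $(z,z') \in \ker \pi$ with $|z| < |z'|$ admits a normal form $(z,z') = (s w_1^n,\, s w_2^n)$ for some $s \in \mathsf{Z}(M)$ and $n \in \nn$. Given such a pair, I would construct the chain $z_i := s w_1^{n-i} w_2^i$ for $i = 0, \dots, n$; using that $(w_1, w_2)$ is irredundant, so $\gcd(w_1, w_2) = 1$ in $\mathsf{Z}(M)$, one gets $\gcd(z_{i-1}, z_i) = s w_1^{n-i} w_2^{i-1}$ and therefore $\mathsf{d}(z_{i-1}, z_i) = \max\{|w_1|, |w_2|\} = N$. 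Since $|z_i|$ is strictly increasing in $i$, this chain is monotone, uniformly bounding $\mathsf{c}_{\text{mon}}(x) \le N$ and hence $\mathsf{c}(x), \mathsf{c}_{\text{adj}}(x) \le N$ for every $x \in M$.

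For the matching lower bound I would invoke the element $b := \pi(w_1)$ from the proof of Proposition~\ref{prop:Betti elements}, where it is shown that $\mathsf{Z}(b) = \{w_1, w_2\}$. Since $\gcd(w_1, w_2) = 1$, we have $\mathsf{d}(w_1, w_2) = \max\{|w_1|, |w_2|\} = N$, which forces $\mathsf{c}(b) = N$ and hence $\mathsf{c}(M) \ge N$. Moreover, $(|w_1|, |w_2|)$ is an adjacent pair in $\mathsf{L}(b)$ with $\mathsf{d}(\mathsf{Z}_{|w_1|}(b), \mathsf{Z}_{|w_2|}(b)) = N$, so $\mathsf{c}_{\text{adj}}(M) \ge N$. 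Combining this with part~(1) and the identity $\mathsf{c}_{\text{mon}}(M) = \max\{\mathsf{c}_{\text{eq}}(M), \mathsf{c}_{\text{adj}}(M)\}$ already recorded in the text gives $\mathsf{c}_{\text{mon}}(M) = \mathsf{c}_{\text{adj}}(M) = N$, and squeezing $\mathsf{c}(M)$ between $N$ and $\mathsf{c}_{\text{mon}}(M) = N$ closes the loop.

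The main step to handle carefully is the normal-form claim $(z,z') = (s w_1^n, s w_2^n)$: this is exactly the cancellation-in-$\mathsf{Z}(M)$ argument from the (c)$\Rightarrow$(a) part of Theorem~\ref{thm:general characterization of OHFM}, resting on the coprimality $\gcd(w_1^n, w_2^n) = 1$ in the free monoid $\mathsf{Z}(M)$. Once that reduction is in place, all three catenary quantities are reduced to routine arithmetic with the distance function, and the only mild subtlety is confirming that each intermediate $z_i$ really lies in $\mathsf{Z}(\pi(z))$, which is automatic because applying the master relation preserves the value of $\pi$.
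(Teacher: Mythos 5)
Your proof is correct and follows the same overall architecture as the paper's: part (1) by unfolding definitions, and part (2) by reducing everything to the unique Betti element $b=\pi(w_1)$ and the monotone chain $z_i = s\,w_1^{\,n-i}w_2^{\,i}$ built from the normal form $(z,z')=(s\,w_1^n, s\,w_2^n)$. The one genuine divergence is in how the value $\mathsf{c}(M)=\max\{|w_1|,|w_2|\}$ is pinned down: the paper gets it in one stroke by citing Philipp's formula $\mathsf{c}(M)=\sup\{\mu(b): b\in \text{Betti}(M)\}$ from \cite[Corollary~9]{aP10} together with Proposition~\ref{prop:Betti elements}, and only then verifies $\mathsf{c}_{\text{mon}}(M)\le \mathsf{c}(M)$ via the chain; you instead prove both bounds directly, deriving the lower bound $\mathsf{c}(M)\ge N$ and $\mathsf{c}_{\text{adj}}(M)\ge N$ from the fact that $\mathsf{Z}(b)=\{w_1,w_2\}$ with $\gcd(w_1,w_2)=1$ forces a step of distance exactly $N$ in any chain, and then squeezing via $\mathsf{c}_{\text{mon}}(M)=\max\{\mathsf{c}_{\text{eq}}(M),\mathsf{c}_{\text{adj}}(M)\}$ and part~(1). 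Your route is self-contained (no external catenary-degree machinery beyond what the paper already develops), at the cost of a slightly longer bookkeeping argument; the paper's route is shorter but leans on the cited result. All the individual steps you flag as delicate --- the normal form from the (c)$\Rightarrow$(a) argument of Theorem~\ref{thm:general characterization of OHFM}, the computation $\gcd(z_{i-1},z_i)=s\,w_1^{\,n-i}w_2^{\,i-1}$, and the strict monotonicity of $|z_i|$ --- check out.
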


\begin{proof}
	(1) For the direct implication, assume that $M$ is a length-factorial monoid. Since $M$ is length-factorial, for every $x \in M$ two factorizations in $\mathsf{Z}(x)$ have the same length if and only if they are equal, which immediately implies that $\mathsf{c}_{\text{eq}}(x) = 0$. Hence $\mathsf{c}_{\text{eq}}(M) = 0$. Conversely, suppose that $\mathsf{c}_{\text{eq}}(M) = 0$. Take $x \in M$, and let $z$ and $z'$ be two factorizations of $x$ such that $|z| = |z'|$. Since $\mathsf{c}_{\text{eq}}(x) \le \mathsf{c}_{\text{eq}}(M) = 0$, it follows that $\mathsf{d}(z,z') = 0$, and so $z = z'$. Thus, distinct factorizations of $x$ must have different lengths. Hence $M$ is a length-factorial monoid.
	\smallskip
	
	(2) Now suppose that $M$ is a proper length-factorial monoid. In order to find the catenary degree of~$M$, it suffices to look at the set $\text{Betti}(M)$: indeed, it follows from \cite[Corollary~9]{aP10} that
	\[
		\mathsf{c}(M) = \sup \{\mu(b) : b \in \text{Betti}(M)\},
	\]
	where $\mu(x) = \sup\{ \min_{z \in \rho} |z| : \rho \in \mathcal{R}_x\}$. By Proposition~\ref{prop:Betti elements}, the monoid $M$ contains only one Betti element $b$ up to associate, and we have seen that $\mathcal{R}_b$ consists of two classes, namely, $\{w_1\}$ and $\{w_2\}$. Thus, $\mathsf{c}(M) = \mu(b) = \max\{|w_1|, |w_2|\}$.
	
	Since $\mathsf{c}_{\text{eq}}(M) = 0$, the equality $\mathsf{c}_{\text{adj}}(M) = \mathsf{c}_{\text{mon}}(M)$ holds. Finally, let us argue that $\mathsf{c}_{\text{mon}}(M) = \mathsf{c}(M)$. If $b \in \text{Betti}(M)$, then $\mathsf{Z}(b) = \{w_1, w_2\}$, as we have seen in the proof of Proposition~\ref{prop:Betti elements}. Clearly, $w_1, w_2$ is a monotone $N$-chain of factorizations from~$w_1$ to $w_2$, where $N = \max\{|w_1|, |w_2|\}$. Thus, $\mathsf{c}_{\text{mon}}(b) \le \max\{|w_1|, |w_2|\} = \mathsf{c}(M)$. Now suppose that $x \in M$ is not a Betti element. If $|\mathsf{Z}(x)| = 1$, then $\mathsf{c}_{\text{mon}}(x) = 0 \le \mathsf{c}(M)$. Suppose, otherwise, that $|\mathsf{Z}(x)| > 1$ and take $z, z' \in \mathsf{Z}(x)$ such that $z \neq z'$. As $M$ is a length-factorial monoid, we can assume that $|z| < |z'|$, so $(z,z') = (w w_1^n, w w_2^n)$ for some $w \in \mathsf{Z}(M)$ and $n \in \nn$. In this case, we can take $z_i := w w_1^{n-i} w_2^i$ for each $i \in \ldb 0, n \rdb$ to obtain an $N$-chain of factorizations from $z$ to $z'$, where $N = \max\{|w_1|, |w_2|\} = \mathsf{c}(M)$. This implies that $\mathsf{c}_{\text{mon}}(x) \le \mathsf{c}(M)$. Hence $\mathsf{c}_{\text{mon}}(M) \le \mathsf{c}(M)$ and, therefore, the equality must hold.
\end{proof}

\bigskip
%%%%%%%%%%%%%%%%%%%%%%%
%%%%%%%%%%%%%%%%%%%%%%%
\section{Pure Irreducibles: The PLS Property}
\label{sec:long and short irreducibles}

In this section, we study the notions of purely long and purely short irreducibles (as introduced in~\cite{CS11}) in connection with length-factoriality. Based on these notions of irreducible elements, we introduce a class of atomic monoids that strictly contains that of length-factorial monoids. We will see that each monoid in this new class naturally decomposes as a sum of a half-factorial monoid and a length-factorial monoid. Throughout this section, we let $M$ be an atomic monoid.

\smallskip
%%%%%%%%%%%%%%%%
\subsection{Pure Irreducibles}

Let $(z_1,z_2)$ be an unbalanced factorization relation of $M$. Then we call the factorization of bigger (resp., smaller) length between $z_1$ and $z_2$ the \emph{longer} (resp., \emph{shorter}) factorization of $(z_1,z_2)$.

\begin{definition}
	Let $M$ be a monoid, and take $a \in \ii(M_{\text{red}})$. We say that $a$ is \emph{purely long} (resp., \emph{purely short}) if $a$ is not prime and for all irredundant and unbalanced factorization relations $(z_1,z_2)$ of $M$, the fact that $a$ appears in $z_1$ implies that $|z_1| > |z_2|$ (resp., $|z_1| < |z_2|$).
\end{definition}

\begin{remark}
	As by definition a purely long (or short) irreducible is not prime, it must appear in at least one nontrivial irredundant factorization relation of $M$.
\end{remark}

We let $\pl(M)$ (resp., $\ps(M)$) denote the set comprising all purely long (resp., purely short) irreducibles of $M_{\text{red}}$. When $M$ is a proper length-factorial monoid, it follows from Corollary~\ref{cor:OHFM and MF} that both $\pl(M)$ and $\ps(M)$ are nonempty sets. More precisely, if $z_1,z_2 \in \mathsf{Z}(M)$ satisfy $|z_1| < |z_2|$ and $(z_1,z_2)$ is an irredundant factorization relation generating the factorization congruence of a length-factorial monoid $M$, then $\pl(M)$ (resp., $\ps(M)$) consists of all irreducibles that appear in $z_2$ (resp., $z_1$).
\smallskip

We call any element of $\pl(M) \cup \ps(M)$ a \emph{pure} irreducible. As a consequence of the following proposition we will obtain that every atomic monoid contains only finitely many pure irreducibles.

\begin{prop} \label{prop:pure irreducibles show in each unbalanced relation}
	For an atomic monoid $M$, let $a$ be a purely short/long irreducible, and let $(w_1,w_2)$ be an irredundant factorization relation. Then $a$ appears in $(w_1, w_2)$ if and only if $(w_1, w_2)$ is unbalanced.
\end{prop}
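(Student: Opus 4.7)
The plan is to handle both directions with the same kind of construction: blending the irredundant relation $(w_1,w_2)$ in question with a witness relation produced from the purely long/short hypothesis. By symmetry I may assume $M$ is reduced and that $a$ is purely long; the purely short case follows either by swapping the roles of $z_1$ and $z_2$ throughout, or by applying the purely long case to the relation obtained by interchanging the two components. The hypothesis that $a$ is not prime guarantees (as the remark after the definition notes) a nontrivial irredundant relation in which $a$ appears; by the purely long property this relation is unbalanced and $a$ lies in its longer factorization. I fix such a relation $(z_1,z_2)$ with $|z_1|>|z_2|$ and with $a$ appearing in $z_1$ with some multiplicity $m\ge 1$; by irredundancy, $a$ does not appear in $z_2$.

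For the forward direction, suppose $a$ appears in $(w_1,w_2)$, say in $w_1$ with multiplicity $n\ge 1$, and assume for a contradiction that $(w_1,w_2)$ is balanced. I form the factorization relation $R:=(kw_1+z_2,\,kw_2+z_1)$ where $k$ is a positive integer chosen so that $kn>m$ (e.g. $k=m+1$). This is legitimate as a positive combination of the relations $(w_1,w_2)$ and $(z_2,z_1)$. The multiplicity of $a$ is $kn$ on the left and $m$ on the right, while the two side-lengths are $k|w_1|+|z_2|$ and $k|w_2|+|z_1|=k|w_1|+|z_1|$, so the right is longer by $|z_1|-|z_2|>0$. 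Canceling $m$ copies of $a$ from both sides, and then reducing the resulting relation to irredundant form, leaves an irredundant relation whose length imbalance is unchanged (the right is still longer by $|z_1|-|z_2|$) and in which $a$ still appears in the shorter left side with positive multiplicity $kn-m$ (no copy of $a$ can be canceled during the reduction, since $a$ no longer appears on the right). This contradicts the purely long property of $a$.

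For the backward direction, suppose $(w_1,w_2)$ is irredundant and unbalanced, with $|w_1|>|w_2|$ after possibly relabeling, and assume for a contradiction that $a$ does not appear in $(w_1,w_2)$. I form $R:=(kz_1+lw_2,\,kz_2+lw_1)$ with $k:=|w_1|-|w_2|$ and $l:=|z_1|-|z_2|$, both positive. A direct length computation shows the two sides have equal length (the difference is $k(|z_1|-|z_2|)-l(|w_1|-|w_2|)=0$), while the multiplicity of $a$ is $km>0$ on the left and $0$ on the right. Reducing $R$ to irredundant form preserves both the balance and the multiplicity of $a$ on the left, so I obtain an irredundant balanced relation in which $a$ appears; this contradicts the forward direction just established.

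The main obstacle I expect is the bookkeeping: one must verify carefully that reducing an arbitrary factorization relation to irredundant form does not alter the length difference between the two sides and cannot decrease the multiplicity of $a$ when $a$ appears on only one side. Both facts are routine (cancellation removes equal multiplicities of common atoms), but they are the linchpin ensuring that the contradictions promised above survive the passage to irredundant form.
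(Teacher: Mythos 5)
Your proof is correct and follows essentially the same route as the paper's: both directions blend the given relation with a suitably scaled power of a witness relation for the pure irreducible, then cancel common atoms so that the copies of $a$ survive on the forbidden side. The only cosmetic difference is in the reverse direction, where you choose exact coefficients to force a balanced relation and then invoke the forward direction, whereas the paper scales until the length inequality reverses and applies the purely long property directly; the underlying cancellation argument is the same.
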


\begin{proof}
	To argue the direct implication suppose, by way of contradiction, that $(w_1, w_2)$ is balanced. We also assume, without loss of generality, that $a$ appears in $w_2$. Suppose first that $a \in \pl(M)$, and take an irredundant factorization relation $(z_1, z_2)$ such that $|z_1| > |z_2|$ and $a$ appears in $z_1$. Then we can take $n \in \nn$ large enough such that the number of copies of $a$ that appear in $w_1^n z_1$ is strictly smaller than the number of copies of $a$ that appear in $w_2^n z_2$. Therefore $(w_1^n z_1, w_2^n z_2)$ yields, after cancellations, an irredundant and unbalanced factorization relation whose shorter factorization involves $a$. However, this contradicts that $a$ is purely long. Supposing that $a \in \ps(M)$, one can similarly arrive to another contradiction.
	\smallskip
	
	For the reverse implication, assume that $(w_1, w_2)$ is unbalanced with $|w_1| < |w_2|$. Suppose first that $a \in \pl(M)$. Take an irredundant factorization relation $(z_1, z_2)$ such that $a$ appears in $(z_1, z_2)$. There is no loss in assuming that $a$ appears in $z_1$ and, therefore, that $|z_1| > |z_2|$. Then there exists $n \in \nn$ such that $|w_1^n z_1| < |w_2^n z_2|$. Since $a$ appears in the shorter factorization of $(w_1^n z_1, w_2^n z_2)$, the fact that $a$ is a purely long irreducible guarantees that $a$ also appears in the longer factorization of $(w_1^n z_1, w_2^n z_2)$. Hence~$a$ appears in $w_2$. For $a \in \ps(M)$ the proof is similar.
\end{proof}

\begin{cor} \label{cor:finitely many purely irreducibles}
	For an atomic monoid $M$, both sets $\pl(M)$ and $\ps(M)$ are finite.
\end{cor}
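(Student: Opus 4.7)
The plan is to derive the finiteness of $\pl(M)\cup\ps(M)$ as a near-immediate consequence of Proposition~\ref{prop:pure irreducibles show in each unbalanced relation}. The guiding observation is that a single irredundant unbalanced factorization relation has support only in finitely many irreducibles of $M_{\text{red}}$, and Proposition~\ref{prop:pure irreducibles show in each unbalanced relation} forces every pure irreducible to be supported inside every such relation.

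Concretely, I would first handle the trivial case where $\pl(M)\cup\ps(M)=\emptyset$, in which case the conclusion is immediate. Otherwise, I would fix any pure irreducible $a_0 \in \pl(M)\cup\ps(M)$. By the remark following the definition of pure irreducibles, $a_0$ is not prime, and hence $a_0$ must appear in at least one nontrivial irredundant factorization relation $(w_1,w_2)$ of $M$. Applying the forward direction of Proposition~\ref{prop:pure irreducibles show in each unbalanced relation} to $a_0$ and $(w_1,w_2)$ then shows that $(w_1,w_2)$ is unbalanced.

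Next, I would take an arbitrary pure irreducible $b \in \pl(M)\cup\ps(M)$ and apply the reverse direction of Proposition~\ref{prop:pure irreducibles show in each unbalanced relation} to the pair $(b,(w_1,w_2))$: since $(w_1,w_2)$ is irredundant and unbalanced, $b$ must appear in $(w_1,w_2)$. Therefore every element of $\pl(M)\cup\ps(M)$ lies in the (finite) set of irreducibles of $M_{\text{red}}$ that occur in $w_1$ or in $w_2$, and the finiteness of $\pl(M)$ and $\ps(M)$ follows.

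There is essentially no obstacle here beyond setting up the nontrivial case carefully; the only subtle point is to justify the existence of the unbalanced relation $(w_1,w_2)$ from the mere existence of a pure irreducible, which is supplied by the remark together with the forward direction of the preceding proposition. Once that step is made, the argument is purely a counting observation on the support of $(w_1,w_2)$.
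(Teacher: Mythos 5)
Your proof is correct and follows essentially the same route as the paper: both arguments fix one irredundant unbalanced factorization relation and use Proposition~\ref{prop:pure irreducibles show in each unbalanced relation} to force every pure irreducible into its finite support. The only (immaterial) difference is that you obtain the unbalanced relation from the existence of a pure irreducible via the remark plus the forward direction of the proposition, whereas the paper obtains it directly from $M$ not being half-factorial.
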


\begin{proof}
	If $M$ is a half-factorial monoid, then both sets $\pl(M)$ and $\ps(M)$ are empty. Otherwise, there must exist an unbalanced factorization relation $(z_1, z_2)$. It follows now from Proposition~\ref{prop:pure irreducibles show in each unbalanced relation} that every pure irreducible of $M$ appears in $(z_1, z_2)$. Hence both sets $\pl(M)$ and $\ps(M)$ must be finite.
\end{proof}
\smallskip

Clearly, atomic monoids having both purely long and purely short irreducibles are natural generalizations of length-factorial monoids, and they will play an important role in the remainder of this paper.

\begin{definition}
	If an atomic monoid $M$ contains both purely long and purely short irreducibles, then we say that $M$ has the \emph{PLS property} or that $M$ is a \emph{PLS monoid}.
\end{definition}

For future reference, we highlight the following immediate corollary of Theorem~\ref{thm:general characterization of OHFM}.

\begin{cor} \label{cor:OHFM are PLSM}
	Every proper length-factorial monoid is a PLS monoid.
\end{cor}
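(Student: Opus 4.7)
The plan is to extract the candidate pure irreducibles directly from a master factorization relation supplied by Corollary~\ref{cor:OHFM and MF}. Let $M$ be a proper length-factorial monoid. By Corollary~\ref{cor:OHFM and MF}, there exists an unbalanced master factorization relation $(w_1, w_2) \in \ker \pi$, which we may assume satisfies $|w_1| < |w_2|$. Since $(w_1, w_2)$ is nontrivial (as $M$ is not half-factorial), both $w_1$ and $w_2$ must contain at least one irreducible. The claim to verify is that every irreducible appearing in $w_2$ belongs to $\pl(M)$ and every irreducible appearing in $w_1$ belongs to $\ps(M)$; this immediately gives $\pl(M) \neq \emptyset$ and $\ps(M) \neq \emptyset$, i.e., $M$ is a PLS monoid.

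Fix an irreducible $a$ appearing in $w_2$. First I would argue that $a$ is not prime: if $a$ were prime then, because $a \mid_M \pi(w_2) = \pi(w_1)$, $a$ would divide some irreducible appearing in $w_1$ and hence would itself appear in $w_1$ (working in $M_{\text{red}}$), contradicting that the master relation $(w_1, w_2)$ is irredundant. Next I would verify the ``purely long'' condition. Let $(z_1, z_2)$ be any irredundant and unbalanced factorization relation in which $a$ appears in $z_1$. By the master property in Corollary~\ref{cor:OHFM and MF}, either $(z_1, z_2) = (w_1^n, w_2^n)$ or $(z_1, z_2) = (w_2^n, w_1^n)$ for some $n \in \nn$. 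The first case is ruled out, since $a$ appears in $z_1 = w_1^n$ would force $a$ to appear in $w_1$, again violating the irredundancy of $(w_1, w_2)$. Hence $(z_1, z_2) = (w_2^n, w_1^n)$, so $|z_1| = n|w_2| > n|w_1| = |z_2|$, showing $a \in \pl(M)$. The symmetric argument, with the roles of $w_1$ and $w_2$ exchanged, shows that any irreducible appearing in $w_1$ is purely short.

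There is no real obstacle here, as essentially all the heavy lifting is already done by the master factorization relation machinery: Corollary~\ref{cor:OHFM and MF} ensures that every irredundant unbalanced relation is, up to swap and power, equal to $(w_1, w_2)$, and this rigidity is precisely what makes each irreducible of $w_1$ (respectively, $w_2$) forced to the shorter (respectively, longer) side of every such relation. The only points to double-check while writing are the non-primality of $a$ and the fact that irredundancy of $(w_1, w_2)$ prevents the ``wrong'' case in the dichotomy; both follow cleanly from the properties of master relations recalled after Theorem~\ref{thm:general characterization of OHFM}.
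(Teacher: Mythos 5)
Your proof is correct and follows the same route the paper takes: the paper treats this as an immediate consequence of the master factorization relation $(w_1,w_2)$ from Corollary~\ref{cor:OHFM and MF}, observing (in the paragraph preceding Proposition~\ref{prop:pure irreducibles show in each unbalanced relation}) that $\pl(M)$ and $\ps(M)$ consist precisely of the irreducibles appearing in the longer and shorter components of that relation. Your verification of non-primality and of the ``purely long/short'' condition via the dichotomy $(z_1,z_2)=(w_1^n,w_2^n)$ or $(w_2^n,w_1^n)$ is exactly the intended argument, just written out in full.
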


The converse of Corollary~\ref{cor:OHFM are PLSM} does not hold even for finitely generated monoids. For any subset $S$ of~$\rr^d$, we let $\cone(S)$ and $\aff(S)$ denote the cone and the affine space generated by $S$, respectively.

\begin{example} \label{ex:fg rank-3 PLSM monoid that is not OHFM}
	For $a_1 = (0,1,1)$, $a_2 = (0,2,1)$, $a_3 = (1,2,3)$, $a_4 = (2,2,2)$, and $a_5 = (3,2,1)$, consider the submonoid $M = \langle a_i : i \in \ldb 1,5 \rdb \rangle$ of $(\nn_0^3,+)$. Clearly, $M$ is atomic and it is not hard to check that $\ii(M) = \{a_i : i \in \ldb 1,5 \rdb\}$. Let $H$ be the hyperplane described by the equation $y = 2$. Since $a_1 \notin H$ and $a_i \in H$ for every $i \in \ldb 2,5 \rdb$, the irreducible $a_1$ is purely long. Because $\cone(a_1, a_2)$ and $\aff(a_3, a_4, a_5)$ only intersect in the origin, $a_1$ and $a_2$ cannot be in the same part of any irredundant factorization relation of $M$. Thus, if $a_2$ appears in an irredundant factorization relation involving $a_1$, then it must appear in its shorter part. In addition, note that because $a_2 \notin \aff(a_3, a_4, a_5)$, there is no irredundant factorization relation of $M$ involving $a_2$ but not $a_1$. Hence $a_2 \in \pl(M)$, and so $M$ is a PLS monoid. However, it follows from~\cite[Section~5]{fG20} that $M$ is not a length-factorial monoid.
\end{example}

None of the conditions $\pl(M) = \emptyset$ and $\ps(M) = \emptyset$ implies the other one. The following example sheds some light upon this observation.

\begin{example} \label{ex:monoids with shorts but no longs}
	For the set $A = \{(0,3), (1,2), (2,1), (3,0)\}$, consider the submonoid $M$ of $(\nn_0^2,+)$ generated by $A$. It is clear that $M$ is atomic, and one can readily check that $\ii(M) = A$. Since all the irreducibles of~$M$ lie in the line determined by the equation $x+y=3$, it follows from \cite[Corollary~5.5]{fG20} that $M$ is a half-factorial monoid.
	
	Now consider the submonoid $M_1$ of $(\nn_0^2,+)$ generated by the set $A_1 = A \cup \{(1,1)\}$. It is easy to verify that $M_1$ is atomic with $\ii(M_1) = A_1$. Moreover, since the irreducibles of $M_1$ are not colinear, it follows from \cite[Corollary~5.5]{fG20} that $M_1$ is not a half-factorial monoid. Therefore there exists an irredundant factorization relation $(z_1, z_2)$ with $|z_1| \neq |z_2|$. Since $M$ is a half-factorial monoid, $(1,1)$ must appear in $(z_1, z_2)$; say that $(1,1)$ appears in $z_1$. After projecting on the line determined by the equation $y = x$, one can easily see that $|z_1| > |z_2|$. As a result, $(1,1)$ is purely long. Note that the irreducibles in $A$ are neither purely long nor purely short because they are precisely the irreducibles of $M$, which is a half-factorial monoid. Hence $M_1$ contains a purely long irreducible but no purely short irreducibles.
	
	Lastly, considering the submonoid $M_2$ of $(\nn_0^2,+)$ generated by the set $A \cup \{(2,2)\}$ and proceeding as we did with $M_1$, one finds that $(2,2)$ is the only purely short irreducible in $M_2$, and also that $M_2$ contains no purely long irreducibles.
\end{example}

We know that half-factorial monoids contain neither purely long nor purely short irreducibles. However, there are monoids that are not half-factorial and still contain neither purely long nor purely short irreducibles.

\begin{example}
	Let $M$ and $A$ be as in Example~\ref{ex:monoids with shorts but no longs}, and let $M_3$ be the submonoid of $(\nn_0^2,+)$ generated by the set $A_3 = A \cup \{ (0,2), (1,1), (2,0) \}$. It is not hard to verify that $M_3$ is an atomic monoid with $\ii(M_3) = A_3$. Since the equalities $2(1,1) = (0,2) + (2,0)$ and $(1,2) + (2,1) = (0,3) + (3,0)$ give rise to two irredundant and balanced factorizations involving each irreducible of $M_3$, the sets $\pl(M_3)$ and $\ps(M_3)$ must be empty. Because of this, $M_3$ cannot be a length-factorial monoid, which is confirmed by \cite[Theorem~5.10]{fG20}. In addition, as the points in $A_3$ are not colinear, it follows from \cite[Corollary~5.5]{fG20} that $M_3$ is not a half-factorial monoid.
\end{example}

\smallskip
%%%%%%%%%%%%%%%%%%%%%%%%%
\subsection{Sum Decomposition of PLS Monoids}

We proceed to show how to decompose the reduced monoid of a PLS monoid $M$ as the inner sum of a half-factorial monoid $M_1$ and a finitely generated length-factorial monoid $M_2$ satisfying that $M_1 \cap M_2 = \{0\}$. We emphasize that such a decomposition does not guarantee the uniqueness of the representation of an element of $M$ as a sum of an element of~$M_1$ and an element of $M_2$.

\begin{theorem} \label{thm:sum decomposition}
	Let $M$ be a PLS monoid. Then there exist submonoids $H$ and $O$ of $M_{\emph{red}}$ satisfying $M_{\emph{red}} = H + O$, where $H$ is a half-factorial monoid and $O$ is a finitely generated proper length-factorial monoid such that $H \cap O = \{0\}$.
\end{theorem}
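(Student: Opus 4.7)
The plan is to construct $H$ and $O$ explicitly by choosing an unbalanced irredundant factorization relation of $M$ with minimal support. Assume without loss of generality that $M$ is reduced. Since $M$ is a PLS monoid, both $\pl(M)$ and $\ps(M)$ are nonempty. Any pure irreducible is non-prime, hence (by the Remark following the definition of pure irreducibles) appears in some nontrivial irredundant factorization relation of $M$, which by Proposition~\ref{prop:pure irreducibles show in each unbalanced relation} must be unbalanced. Among all unbalanced irredundant factorization relations of $M$, pick $(w_1,w_2)$ such that $A := \text{supp}(w_1) \cup \text{supp}(w_2)$ has minimal cardinality. By the same proposition, every pure irreducible of $M$ appears in $(w_1,w_2)$, so $A \supseteq \pl(M) \cup \ps(M)$. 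Set $O := \langle A \rangle$ and $H := \langle \ii(M) \setminus A \rangle$; then $O$ is finitely generated and $M_{\text{red}} = H + O$ is clear.

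The half-factoriality of $H$ is immediate: any unbalanced irredundant factorization relation of $H$ is also one of $M$ with support disjoint from the pure irreducibles, contradicting Proposition~\ref{prop:pure irreducibles show in each unbalanced relation}. For $O$, the relation $(w_1,w_2)$ lies in $\ker \pi_O$, so $O$ is not factorial. To verify length-factoriality via Theorem~\ref{thm:general characterization of OHFM}(c), identify $K := \ker \pi_M$ with a $\zz$-submodule of $\zz^{\ii(M)}$, let $\phi \colon K \to \zz$ be the length-difference map, and set $K_0 := \ker \phi$. Minimality of $|A|$ forces $K_0 \cap \zz^A = \{0\}$: a nonzero $v_0 \in K_0 \cap \zz^A$, combined with $w := w_1 - w_2$ via the integer combination $v_0(i)\, w - w(i)\, v_0$ at some $i \in \text{supp}(v_0)$, would give an unbalanced vector in $K$ with support strictly inside $A \setminus \{i\}$, violating minimality. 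Consequently $K \cap \zz^A = \zz w$, so $\ker \pi_O$ is a nontrivial cyclic congruence and $O$ is proper length-factorial.

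To verify $H \cap O = \{0\}$, suppose for contradiction that a nonzero $x \in H \cap O$ yields factorizations $z_H \in \mathsf{Z}(H)$ and $z_O \in \mathsf{Z}(O)$, producing the nontrivial irredundant relation $(z_O, z_H)$. If this were unbalanced, Proposition~\ref{prop:pure irreducibles show in each unbalanced relation} would put every pure irreducible into $z_O$ (since atoms of $H$ avoid $A$); but then a purely long atom in $z_O$ would force $|z_O| > |z_H|$ and a purely short atom in $z_O$ would force $|z_O| < |z_H|$, contradicting the PLS hypothesis. Thus $(z_O, z_H)$ is balanced, and $v := z_O - z_H$ is a nonzero element of $K_0$ with positive part in $A$ and negative part in $\ii(M) \setminus A$. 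One then combines $v$ with $w$ through an integer linear combination designed to cancel enough $A$-components of the resulting vector, producing a new unbalanced factorization relation of $M$ whose support strictly refines $A$ and so contradicts the chosen minimality.

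The main obstacle is this last step: the vector $v$ has nonzero components on $\ii(M) \setminus A$ that cannot be eliminated by any $\zz$-multiple of $w$ (whose support lies in $A$), so the naive support-reduction trick used for $O$'s length-factoriality does not apply directly. A more refined argument, possibly via a Smith-normal-form style reduction of the lattice $K$ handling both $A$- and $(\ii(M)\setminus A)$-components simultaneously, should complete the $H \cap O = \{0\}$ step and thereby conclude the proof.
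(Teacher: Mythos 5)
Your construction diverges from the paper's at exactly the point where your proof breaks down. You take $O=\langle A\rangle$ for $A$ the support of a support-minimal unbalanced irredundant relation, whereas the paper takes $O=\langle \pl(M)\cup\ps(M)\rangle$ and $H=\langle \ii(M)\setminus(\pl(M)\cup\ps(M))\rangle$. Your $A$ contains $\pl(M)\cup\ps(M)$ but can be strictly larger: in Example~\ref{ex:fg rank-3 PLSM monoid that is not OHFM} the pure irreducibles are $a_1$ and $a_2$, which are $\qq$-linearly independent, so no unbalanced irredundant relation is supported on the pure irreducibles alone and $A$ must contain non-pure atoms. This is what makes your $H\cap O=\{0\}$ step fail: after you (correctly) rule out the unbalanced case, you are left with a balanced relation $(z_O,z_H)$ in which $z_O$ may consist entirely of non-pure atoms of $A$, so Proposition~\ref{prop:pure irreducibles show in each unbalanced relation} gives no information, and, as you yourself observe, no $\zz$-multiple of $w$ (supported in $A$) can cancel the $(\ii(M)\setminus A)$-components of $z_O-z_H$, so the minimality of $|A|$ cannot be invoked. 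This is a genuine, unfilled gap, not a routine detail. The repair is to shrink $O$ to $\langle\pl(M)\cup\ps(M)\rangle$ (finite by Corollary~\ref{cor:finitely many purely irreducibles}): then a nonzero $x\in H\cap O$ produces a nonempty $z_O$ made of pure irreducibles, so $(z_O,z_H)$ is unbalanced by Proposition~\ref{prop:pure irreducibles show in each unbalanced relation}; but then every purely long and every purely short irreducible must appear in $z_O$ (none can appear in $z_H$), forcing $|z_O|>|z_H|$ and $|z_O|<|z_H|$ simultaneously --- the very contradiction you already derived --- whence $z_O$ is empty and $x=0$.

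With that choice the remaining verifications also collapse to one-liners: every nontrivial irredundant relation in $\ker\pi_O$ contains a pure irreducible and is therefore unbalanced, so $O$ is length-factorial with no lattice computation, and $H$ is half-factorial exactly as you argue. Your minimality argument for the length-factoriality of $\langle A\rangle$ is correct as far as it goes, but two caveats: showing that the relation lattice $K\cap\zz^A$ is cyclic is weaker than showing that the congruence $\ker\pi_O$ is cyclic, so you cannot appeal to Theorem~\ref{thm:general characterization of OHFM}(c) as stated --- you should instead conclude directly from $K_0\cap\zz^A=\{0\}$ that every nonzero relation of $O$ is unbalanced, which is the definition of length-factoriality. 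In sum, the proposal establishes $M_{\text{red}}=H+O$, the half-factoriality of $H$, and the (proper) length-factoriality of $O$, but the disjointness $H\cap O=\{0\}$ remains unproven under your choice of $A$, and the missing idea is precisely to generate $O$ by the pure irreducibles only.
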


\begin{proof}
	Let $O$ be the submonoid of $M_{\text{red}}$ generated by the set $\pl(M) \cup \ps(M)$. It is clear that $O$ is an atomic monoid with $\ii(O) = \pl(M) \cup \ps(M)$. Moreover, note that $\pl(O) = \pl(M)$ and $\ps(O) = \ps(M)$. By Corollary~\ref{cor:finitely many purely irreducibles}, the monoid $O$ is finitely generated. To verify that $O$ is a length-factorial monoid, let $(z_1, z_2)$ be a nontrivial irredundant factorization relation in $\ker \pi_O$. Since at least one irreducible in $\pl(M) \cup \ps(M)$ appears in the relation $(z_1, z_2)$, the latter must be unbalanced by Proposition~\ref{prop:pure irreducibles show in each unbalanced relation}. As a consequence, $O$ is a proper length-factorial monoid.
	
	Now let $H$ be the submonoid of $M_{\text{red}}$ generated by $\ii(M) \setminus (\pl(M) \cup \ps(M))$. It follows immediately that $H$ is atomic with $\ii(H) = \ii(M) \setminus (\pl(M) \cup \ps(M))$. To see that $H$ is a half-factorial monoid, it suffices to observe that since $\ker \pi_H \subseteq \ker \pi_M$, any irredundant factorization relation of $\ker \pi_H$ must be balanced by Proposition~\ref{prop:pure irreducibles show in each unbalanced relation}.
	
	 Because $\ii(M_{\text{red}}) = \ii(H) \cup \ii(O)$, we find that $M_{\text{red}} = H + O$. To argue that $H$ and $O$ have trivial intersection, suppose that $x \in H \cap O$. As both $H$ and $O$ are atomic monoids, one can take $z_1 \in \mathsf{Z}_H(x)$ and $z_2 \in \mathsf{Z}_O(x)$. Therefore $(z_1, z_2) \in \ker \pi_M$. Since $\pl(M) \neq \emptyset$ and $\ps(M) \neq \emptyset$, if a pure irreducible appeared in $z_2$, then a pure irreducible would appear in $z_1$. As $z_1$ consists of non-pure irreducibles, $z_2$ must be the factorization with no irreducibles, whence $x = 0$. As a result, $H \cap O = \{0\}$, which implies that $M_\text{red} = H \oplus O$.
\end{proof}

The converse of Theorem~\ref{thm:sum decomposition} does not hold in general, as the following example indicates.

\begin{example}
	Consider the additive submonoid $M$ of $(\nn_0^2,+)$ generated by the set of lattice points $\{(1,1), (0,3), (1,2), (2,1), (3,0) \}$. We have already seen in the second paragraph of Example~\ref{ex:monoids with shorts but no longs} that $\pl(M) = \{(1,1)\}$ and $\ps(M) = \emptyset$. Therefore $M$ is not a PLS monoid. The submonoid $H = \langle (1,2), (0,3) \rangle$ of~$M$ is clearly a factorial monoid and, in particular, a half-factorial monoid. On the other hand, one can see that the submonoid $O = \langle (1,1), (2,1), (3,0) \rangle$ of $M$ is a proper length-factorial monoid by applying Theorem~\ref{thm:general characterization of OHFM} with $a = (1,1)$. It follows immediately that $M = H \oplus O$ even though $M$ is not a PLS monoid.
\end{example}

We conclude this section with the following proposition.

\begin{prop}
	Let $M$ be a PLS monoid. Then there exists an unbalanced factorization relation $(w_1, w_2) \in \ker \pi$ such that every factorization relation of $\ker \pi$ has the form $(w_1^n h_1, w_2^n h_2)$ for some $n \in \nn_0$ and some balanced factorization relation $(h_1, h_2) \in \ker \pi$.
\end{prop}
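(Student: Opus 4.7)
The plan is to take $(w_1, w_2)$ to be the master factorization relation of the length-factorial summand supplied by Theorem~\ref{thm:sum decomposition}, and then reduce any element of $\ker \pi_M$ to a power of this master times a balanced remainder. Invoke Theorem~\ref{thm:sum decomposition} to write $M_{\text{red}} = H + O$, where $H$ is half-factorial and $O$ is proper length-factorial with $\ii(O) = \pl(M) \cup \ps(M)$. Corollary~\ref{cor:OHFM and MF} supplies a master factorization relation of $O$; label its parts so that $|w_1| < |w_2|$. Since every atom of $O$ is pure, Proposition~\ref{prop:pure irreducibles show in each unbalanced relation} applied inside $O$ forces $w_1 \in \mathsf{Z}(\ps(M))$ and $w_2 \in \mathsf{Z}(\pl(M))$. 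Because the atoms of $O$ are atoms of $M_{\text{red}}$, the pair $(w_1, w_2)$ lies in $\ker \pi_M$ and is unbalanced.

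Fix $(z_1, z_2) \in \ker \pi_M$ and cancel common irreducibles to write $(z_1, z_2) = g \cdot (z_1', z_2')$ with $(z_1', z_2')$ irredundant; the irredundant case suffices, since any common factor $g$ absorbs harmlessly into both balanced factors. If $(z_1', z_2')$ is balanced, take $n = 0$. Otherwise assume $|z_1'| < |z_2'|$; by Proposition~\ref{prop:pure irreducibles show in each unbalanced relation} every purely short atom appears only in $z_1'$ and every purely long atom only in $z_2'$. Hence $z_1' = A \cdot B$ and $z_2' = C \cdot D$ with $A \in \mathsf{Z}(\ps(M))$, $C \in \mathsf{Z}(\pl(M))$, and $B, D \in \mathsf{Z}(\ii(H))$. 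The crucial step is to produce $n \in \nn$ with $A = w_1^n$ and $C = w_2^n$, so that $(z_1', z_2') = (w_1^n B, w_2^n D)$ with $(B, D) \in \ker \pi_H$, and then the half-factoriality of $H$ guarantees $|B| = |D|$.

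To extract such $n$, for each $q \in \nn$ consider the auxiliary factorization relation $(z_1' \, w_2^q, z_2' \, w_1^q) \in \ker \pi_M$, valid because $\pi(w_1) = \pi(w_2)$; its length difference equals $\Delta - q\Delta_0$, where $\Delta := |z_2'| - |z_1'|$ and $\Delta_0 := |w_2| - |w_1|$. Taking $q$ large enough that $A \mid w_1^q$ and $C \mid w_2^q$ in $\mathsf{Z}(M)$, the irredundant form of this auxiliary relation is $\bigl((w_2^q/C) \, B, (w_1^q/A) \, D\bigr)$. Setting $n := \Delta/\Delta_0$ (once divisibility is established) makes this irredundant relation balanced, so Proposition~\ref{prop:pure irreducibles show in each unbalanced relation} forbids any pure atom from appearing in it, compelling $w_2^q/C = 1$ and $w_1^q/A = 1$, that is, $A = w_1^n$ and $C = w_2^n$. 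The main obstacle is the divisibility $\Delta_0 \mid \Delta$. The plan to verify it is to iterate the auxiliary construction, also using the swap $(z_1' \, w_1^q, z_2' \, w_2^q)$, to manufacture an irredundant unbalanced relation in $\ker \pi_M$ of length difference strictly between $0$ and $\Delta_0$; isolating its pure-atom parts via Proposition~\ref{prop:pure irreducibles show in each unbalanced relation} and comparing with the length-factoriality of $O$---whose only irredundant unbalanced relations have the form $(w_1^k, w_2^k)$ with length difference $k\Delta_0$ by Corollary~\ref{cor:OHFM and MF}---yields a contradiction. The PLS hypothesis, that both $\ps(M)$ and $\pl(M)$ are nonempty, is used throughout to keep the purely-short and purely-long segregation intact under these manipulations.
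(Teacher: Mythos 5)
Your overall strategy---routing everything through the decomposition $M_{\text{red}} = H \oplus O$ of Theorem~\ref{thm:sum decomposition} and taking $(w_1,w_2)$ to be the master relation of the length-factorial summand $O$---is genuinely different from the paper's proof, which never invokes that decomposition: the paper fixes a single purely long atom $a$, lets $S$ be the subgroup of $\gp(M)$ generated by $\ii(M)\setminus\{a\}$, and controls the multiplicity of $a$ in the longer part of any irredundant unbalanced relation via the annihilator $\mathrm{Ann}(a+S)=m\zz$. Your reduction to the irredundant case and your segregation of the pure parts $A\in\mathsf{Z}(\ps(M))$, $C\in\mathsf{Z}(\pl(M))$ via Proposition~\ref{prop:pure irreducibles show in each unbalanced relation} are fine, and your final step (a balanced irredundant relation contains no pure atoms, forcing $A=w_1^n$ and $C=w_2^n$) is sound \emph{once} the divisibility $\Delta_0=|w_2|-|w_1|\mid \Delta=|z_2'|-|z_1'|$ is known.

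That divisibility is exactly where the gap is, and your proposed repair does not close it. Manufacturing an irredundant unbalanced relation of $M$ whose length difference lies strictly between $0$ and $\Delta_0$ is not, by itself, a contradiction: Corollary~\ref{cor:OHFM and MF} constrains the irredundant unbalanced relations \emph{of $O$}, but the pure-atom parts of a relation in $\ker \pi_M$ need not form a relation in $\ker \pi_O$. From $\pi(A)+\pi(B)=\pi(C)+\pi(D)$ with $\pi(A),\pi(C)\in O$ and $\pi(B),\pi(D)\in H$ you cannot conclude $\pi(A)=\pi(C)$, because---as the paper warns just before Theorem~\ref{thm:sum decomposition}---the decomposition $M_{\text{red}}=H\oplus O$ does not give unique representations; so ``comparing with the length-factoriality of $O$'' has nothing to compare against. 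The paper obtains the divisibility by a multiplicity-counting mechanism your length-only bookkeeping lacks: the number of copies of the fixed pure atom $a$ in the longer part of $(z_1,z_2)$ lies in $\mathrm{Ann}(a+S)=m\zz$, hence equals $nm$; then $(w_1^n z_2, w_2^n z_1)$ cancels to an irredundant relation in which $a$ does not appear, which by Proposition~\ref{prop:pure irreducibles show in each unbalanced relation} must be balanced---and that balancedness is precisely the identity $|z_1|-|z_2|=n\,(|w_1|-|w_2|)$ you are missing. Without some invariant attached to a single pure atom (rather than to lengths), your argument cannot rule out $\Delta_0\nmid\Delta$.
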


\begin{proof}
	Take $a \in \pl(M)$. Set $A = \ii(M) \setminus \{a\}$, and let $S$ be the subgroup of $\gp(M)$ generated by~$A$. Since
	~$a$ appears in an irredundant and unbalanced factorization relation of $M$, there exists $m \in \nn$ such that $\text{Ann}(a + S) = m \zz$, where $\text{Ann}(a+S)$ is the annihilator of $a + S$ in the $\zz$-module $\gp(M)/S$. As $ma \in S$, there is an irredundant factorization relation $(w_1, w_2)$ of $M$ such that exactly $m$ copies of $a$ appear in~$w_1$. It follows from Proposition~\ref{prop:pure irreducibles show in each unbalanced relation} that $|w_1| > |w_2|$. Suppose now that $(z_1, z_2)$ is an irredundant factorization relation of $M$ with $|z_1| > |z_2|$, and let $k \in \nn$ be the number of copies of $a$ appearing in $z_1$. Notice that $k \in \text{Ann}(a + S)$, and therefore $k = nm$ for some $n \in \nn$. Then $(w_1^n z_2, w_2^n z_1) \in \ker \pi$ yields, after cancellations, a factorization relation that does not involve $a$. Thus, such a factorization must be balanced by Proposition~\ref{prop:pure irreducibles show in each unbalanced relation} and cannot involve any pure irreducible. So the number of copies of each irreducible $b$ in $\pl(M)$ (resp., $\ps(M)$) that appear in $z_1$ (resp., $z_2$) equals~$n$ times the number of copies of $b$ that appear in $w_1$ (resp., $w_2$). Hence $(z_1, z_2) = (w_1^n h_1, w_2^n h_2)$, where $h_1, h_2 \in \mathsf{Z}(M)$ involve no pure irreducibles. Clearly, $(h_1, h_2) \in \ker \pi$, and Proposition~\ref{prop:pure irreducibles show in each unbalanced relation} guarantees that $|h_1| = |h_2|$.
\end{proof}

\bigskip
%%%%%%%%%%%%%%%%
%%%%%%%%%%%%%%%%
\section{Finite-Rank Monoids}
\label{sec:finite rank monoids}

In this section, we continue studying the OHF and the PLS properties, but we restrict our attention to the class of finite-rank monoids.

\smallskip
%%%%%%%%%%%%%%%%%%%
\subsection{Number of Irreducibles}

If $M$ is a reduced finite-rank factorial monoid, then it follows from~\cite[Proposition~1.2.3(2)]{GH06} that $|\ii(M)| = \rank(M)$. In parallel with this, the cardinality of $\ii(M)$ in a finite-rank proper length-factorial monoid $M$ can be determined.

\begin{prop} \label{prop:number of atoms of a finite-rank OHFM}
	Let $M$ be a proper length-factorial monoid whose rank is finite. Then the equality $|\ii(M_{\emph{red}})| = \rank(M) + 1$ holds.
\end{prop}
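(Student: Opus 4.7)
The plan is to use the characterization of length-factorial monoids in terms of integral independence (Theorem~\ref{thm:general characterization of OHFM}) to get the upper bound, and to use the fact that the atoms generate $\gp(M)$ together with the failure of factoriality to get the matching lower bound. As usual we may pass to $M_{\text{red}}$, since $M$ is length-factorial (resp., factorial) if and only if $M_{\text{red}}$ is, and $\rk(M) = \rk(M_{\text{red}})$; so identify $M_{\text{red}}$ with $M$ and write $n := \rk(M)$. Since the rank is finite, $\gp(M)$ is a finitely generated torsion-free abelian group of rank $n$, hence $\gp(M) \cong \zz^n$.

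For the upper bound $|\ii(M)| \le n+1$, apply Theorem~\ref{thm:general characterization of OHFM}(b) to $M$ (which is a proper length-factorial monoid by hypothesis) to obtain an atom $a \in \ii(M)$ such that $\ii(M) \setminus \{a\}$ is integrally independent in $\gp(M)$. Since any $\zz$-linearly independent subset of $\gp(M) \cong \zz^n$ has cardinality at most $n$, this gives $|\ii(M) \setminus \{a\}| \le n$, and therefore $|\ii(M)| \le n + 1$.

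For the matching lower bound $|\ii(M)| \ge n+1$, observe that since $M$ is atomic, $\ii(M)$ generates $M$ as a monoid, and therefore generates $\gp(M)$ as an abelian group. In particular, $|\ii(M)| \ge \rk(\gp(M)) = n$. Suppose, toward a contradiction, that $|\ii(M)| = n$. Then $\ii(M)$ is a generating set of $\zz^n$ of cardinality exactly $n$; by a standard fact about finitely generated free abelian groups (the surjection $\zz^n \twoheadrightarrow \zz^n$ sending a chosen basis to the given generators must be an isomorphism), it follows that $\ii(M)$ is in fact a $\zz$-basis of $\gp(M)$. But then any two factorizations $z, z' \in \mathsf{Z}(x)$ of an element $x \in M$ satisfy $\pi(z) - \pi(z') = 0$ in $\gp(M)$, which by linear independence forces $z = z'$; hence $M$ would be a factorial monoid, contradicting the assumption that $M$ is a proper length-factorial monoid. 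Thus $|\ii(M)| \ge n+1$, and combining both inequalities yields the desired equality.

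There is no serious obstacle here; the only point that needs a moment's care is the step identifying an $n$-element generating set of $\zz^n$ with a basis (so that the non-factoriality of $M$ can be invoked to exclude the borderline case $|\ii(M)| = n$). Everything else is a direct application of Theorem~\ref{thm:general characterization of OHFM} and elementary properties of $\gp(M)$.
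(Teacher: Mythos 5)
Your overall strategy is the same as the paper's: Theorem~\ref{thm:general characterization of OHFM}(b) supplies an atom $a$ with $\ii(M)\setminus\{a\}$ integrally independent, which gives the upper bound, and atomicity together with non-factoriality rules out the smaller cardinalities. The one genuine flaw is the opening assertion that ``since the rank is finite, $\gp(M)$ is a finitely generated torsion-free abelian group of rank $n$, hence $\gp(M)\cong\zz^n$.'' Finite rank does not imply finite generation: for instance, the rank-$1$ atomic Puiseux monoid $\langle 1/p : p \text{ prime}\rangle$ has Grothendieck group a non-cyclic, non-finitely-generated subgroup of $\qq$. The paper sidesteps this entirely by working inside the $\qq$-vector space $V=\qq\otimes_\zz\gp(M)$, where only $\dim_\qq V=n$ is needed.

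Fortunately, both places where you invoke the false claim can be repaired. For the upper bound you only need that a $\zz$-linearly independent subset of a torsion-free abelian group of rank $n$ has at most $n$ elements; this holds because such a subset maps to a $\qq$-linearly independent subset of $\qq\otimes_\zz\gp(M)$, and freeness of $\gp(M)$ is irrelevant. For the lower bound, under the contradiction hypothesis $|\ii(M)|=n$ the group $\gp(M)$ \emph{is} generated by $n$ elements, hence is a finitely generated torsion-free group of rank $n$ and therefore genuinely isomorphic to $\zz^n$; only at that point may you invoke the fact that an $n$-element generating set of $\zz^n$ is a basis. With these two adjustments your argument is complete and matches the paper's in substance; the paper merely packages both bounds at once by observing that $\ii(M)\setminus\{a\}$ is simultaneously linearly independent and (because $\ii(M)$ spans $V$ and is dependent) spanning, hence a basis of $V$ of cardinality exactly $n$.
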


\begin{proof}
	As $\gp(M_{\text{red}}) \cong \gp(M)/\uu(M)$, the monoid $M_{\text{red}}$ has finite rank. Hence one can replace $M$ by $M_{\text{red}}$ and assume that $M$ is reduced. Set $r = \rank(M)$ and then embed $M$ into the $\qq$-vector space $V := \qq \otimes_\zz \gp(M) \cong \qq^r$ via $M \hookrightarrow \gp(M) \to \qq \otimes_\zz \gp(M)$, where the injectivity of the second map follows from the flatness of the $\zz$-module $\qq$. So we can think of $M$ as an additive submonoid of the finite-dimensional vector space $\qq^r$. By Theorem~\ref{thm:general characterization of OHFM}, there exists $a \in \ii(M)$ such that $\ii(M) \setminus \{a\}$ and $a - \ii(M) \setminus \{a\}$ are integrally independent sets in $\gp(M)$. In particular, the sets $\ii(M) \setminus \{a\}$ and $a - \ii(M) \setminus \{a\}$ are linearly independent inside the vector space $V$. Because $M$ is atomic, $\gp(M)$ can be generated by $\ii(M)$ as a $\zz$-module and, therefore, $\ii(M)$ is a generating set of $V$. Since $M$ is a proper length-factorial monoid, the monoid $M$ is not a factorial monoid and, consequently, $\ii(M)$ is a linearly dependent set of $V$. This along with the fact that $\ii(M) \setminus \{a\}$ is linearly independent in $V$ implies that $\ii(M) \setminus \{a\}$ is a basis for $V$. Hence $|\ii(M)| = |\ii(M) \setminus \{a\}| + 1 = r + 1$.
\end{proof}

\begin{cor} \label{cor:finite-rank OHFM are f.g}
	Every finite-rank length-factorial monoid is finitely generated.
\end{cor}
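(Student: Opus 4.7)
The plan is to obtain the corollary as an almost immediate consequence of Proposition~\ref{prop:number of atoms of a finite-rank OHFM}. Following the same reduction used in the proof of that proposition, I would first replace $M$ by $M_{\text{red}}$ and assume that $M$ is reduced: length-factoriality, atomicity, and the finiteness of the rank all transfer, and a reduced atomic monoid is finitely generated precisely when its set of atoms is finite.

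I would then split into two cases according to whether $M$ is factorial or proper length-factorial. If $M$ is factorial, then as a reduced factorial monoid it is isomorphic to the free commutative monoid on $\ii(M)$, so by \cite[Proposition~1.2.3(2)]{GH06} the equality $|\ii(M)| = \rank(M)$ holds, which is finite by hypothesis. If instead $M$ is a \emph{proper} length-factorial monoid, Proposition~\ref{prop:number of atoms of a finite-rank OHFM} gives the slightly different count $|\ii(M)| = \rank(M) + 1$, again finite.

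In either case $\ii(M)$ is finite; since $M$ is atomic and reduced, every element of $M$ is a finite sum of elements of $\ii(M)$, so $\ii(M)$ is a finite monoid-generating set for $M$. Hence $M$ is finitely generated. There is no real obstacle in this argument, since all the substantive work has already been carried out in Proposition~\ref{prop:number of atoms of a finite-rank OHFM}; the only bookkeeping step is separating the two cases in order to absorb the shift from $\rank(M)$ atoms in the factorial case to $\rank(M) + 1$ atoms in the proper length-factorial case.
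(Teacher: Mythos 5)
Your proof is correct and is essentially the argument the paper intends: the corollary is stated without proof precisely because it follows from Proposition~\ref{prop:number of atoms of a finite-rank OHFM} (together with the count $|\ii(M)|=\rank(M)$ in the factorial case) exactly as you describe, via the observation that a reduced atomic monoid with finitely many atoms is finitely generated. The case split and the reduction to $M_{\text{red}}$ are the same bookkeeping the paper performs in the proof of the proposition itself.
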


The condition of having finite rank in Corollary~\ref{cor:finite-rank OHFM are f.g} is not superfluous. For instance, consider the additive monoid $M = \langle 2,3 \rangle \oplus \nn_0^\infty$, where $\nn_0^\infty$ is the direct sum of countably many copies of $\nn_0$. Since $\langle 2,3 \rangle$ is a proper length-factorial monoid and $\nn_0^\infty$ is a factorial monoid, $M$ is a proper length-factorial monoid. However, $M$ is not finitely generated because $\rank(M) = \infty$. The converse of Proposition~\ref{prop:number of atoms of a finite-rank OHFM} does not hold in general, as the following example shows.

\begin{example} \label{ex:finite-rank monoids that are not OHFM}
	For every $r \in \nn$, consider the submonoid $M_r$ of $(\nn_0^r, +)$ that is generated by the set $S = \{v_0, re_j : j \in \ldb 1,r \rdb \}$, where $v_0 := \{ e_1 + \dots + e_r \}$. It is not hard to verify that $\ii(M_r) = S$, and so $|\ii(M_r)| = r+1$. Notice that each point in $S$ lies in the hyperplane of $\rr^r$ determined by the equation $x_1 + \dots + x_r = r$. Hence it follows from \cite[Corollary~5.5]{fG20} that $M_r$ is a proper half-factorial monoid. Therefore $M_r$ cannot be a length-factorial monoid.
\end{example}

\smallskip
%%%%%%%%%%%%%%%%%%
\subsection{Monoids of Small Rank}

As we have emphasized in Corollary~\ref{cor:OHFM are PLSM}, every proper length-factorial monoid is a PLS monoid. We proceed to show that being a length-factorial monoid is equivalent to being a PSLM in the class of torsion-free monoids with rank at most~$2$.

\begin{theorem} \label{thm:PLSM of rank 2}
	For a torsion-free monoid $M$ with $\rank(M) \le 2$, the following statements are equivalent.
	\begin{enumerate}
		\item[(a)] The monoid $M$ is a proper length-factorial monoid.
		\smallskip
		
		\item[(b)] The monoid $M$ is a PLS monoid.
		\smallskip
		
		\item[(c)] The congruence $\ker \pi$ can be generated by an unbalanced factorization relation.
	\end{enumerate}
\end{theorem}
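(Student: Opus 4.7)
The plan is to reduce to the implication (b) $\Rightarrow$ (a), since (a) $\Rightarrow$ (b) is immediate from Corollary~\ref{cor:OHFM are PLSM} and (a) $\Leftrightarrow$ (c) follows from Theorem~\ref{thm:general characterization of OHFM} together with Corollary~\ref{cor:OHFM and MF} (a proper length-factorial monoid has a cyclic $\ker \pi$ generated by the unbalanced master factorization relation, and conversely a cyclic $\ker \pi$ with an unbalanced generator forces the condition in Theorem~\ref{thm:general characterization of OHFM}(c)). So assume $M$ is PLS; I would like to show $M$ is a proper length-factorial monoid.

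Since both length-factoriality and the PLS property depend only on $M_{\text{red}}$ and $\rank(M_{\text{red}}) \le \rank(M) \le 2$, I may assume $M$ is reduced. By Theorem~\ref{thm:sum decomposition}, write $M = H \oplus O$, where $H$ is half-factorial and $O$ is a finitely generated proper length-factorial submonoid with $\ii(O) = \pl(M) \cup \ps(M)$. This direct sum yields $\gp(M) = \gp(H) \oplus \gp(O)$, hence $\rank(M) = \rank(H) + \rank(O)$. Proposition~\ref{prop:number of atoms of a finite-rank OHFM} applied to $O$ gives $|\ii(O)| = \rank(O) + 1$, and the PLS hypothesis forces $|\ii(O)| \ge 2$. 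Consequently $\rank(O) \ge 1$ and $\rank(H) \le 1$.

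The crux is to show that $H$ is factorial. Since $H$ is a submonoid of the reduced, torsion-free monoid $M$, it inherits these properties. If $\rank(H) = 0$, then $H = \{0\}$. If $\rank(H) = 1$, then $\gp(H)$ embeds in $\qq$ and $H$ embeds as a reduced atomic submonoid of $\qq_{\ge 0}$. In such a monoid, any two distinct atoms $x, y \in \ii(H)$ are distinct positive rationals, so there exist coprime positive integers $p \ne q$ with $px = qy$, yielding an irredundant unbalanced factorization relation in $H$ that contradicts its half-factoriality. Thus $|\ii(H)| \le 1$, and atomicity forces $H \in \{\{0\}, \nn_0\}$, so $H$ is factorial. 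This rank-$1$ half-factorial-implies-factorial lemma is the main obstacle; everything else is structural bookkeeping built on Theorem~\ref{thm:sum decomposition}.

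With $H$ factorial, I conclude by verifying Theorem~\ref{thm:general characterization of OHFM}(b) for $M$. Pick $a \in \ii(O)$ witnessing that theorem for the proper length-factorial monoid $O$, so $\ii(O) \setminus \{a\}$ and $a - (\ii(O) \setminus \{a\})$ are integrally independent in $\gp(O)$. Factoriality of $H$ makes $\ii(H)$ integrally independent in $\gp(H)$. Since $\gp(M) = \gp(H) \oplus \gp(O)$ and $\ii(M) \setminus \{a\} = \ii(H) \sqcup (\ii(O) \setminus \{a\})$, a direct component-wise check shows that both $\ii(M) \setminus \{a\}$ and $a - (\ii(M) \setminus \{a\})$ are integrally independent in $\gp(M)$. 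Theorem~\ref{thm:general characterization of OHFM} then delivers that $M$ is proper length-factorial, establishing (a).
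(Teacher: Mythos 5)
There is a genuine gap at the step where you pass from the monoid decomposition of Theorem~\ref{thm:sum decomposition} to the group-level statement $\gp(M) = \gp(H) \oplus \gp(O)$ and hence to $\rank(M) = \rank(H) + \rank(O)$. Theorem~\ref{thm:sum decomposition} only gives $M_{\text{red}} = H + O$ with $H \cap O = \{0\}$ \emph{as submonoids}, and the paper explicitly warns, just before stating it, that this decomposition does not guarantee uniqueness of representations; trivial intersection of the monoids does not imply trivial intersection of the subgroups $\gp(H)$ and $\gp(O)$ inside $\gp(M)$. In general one only gets $\gp(M) = \gp(H) + \gp(O)$, so $\rank(H) + \rank(O) = \rank(M) + \rank\big(\gp(H) \cap \gp(O)\big)$, which can exceed $\rank(M)$. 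The monoid of Example~\ref{ex:fg rank-3 PLSM monoid that is not OHFM} shows this failure is real for PLS monoids: there $O = \langle (0,1,1),(0,2,1)\rangle$ has $\gp(O) = \{0\}\times\zz^2$ of rank $2$, $H = \langle (1,2,3),(2,2,2),(3,2,1)\rangle$ has rank $2$, and $(0,2,4) = (1,2,3)+(2,2,2)-(3,2,1)$ is a nonzero element of $\gp(H)\cap\gp(O)$, so $\rank(H)+\rank(O) = 4 > 3 = \rank(M)$. Since your bound $\rank(H)\le 1$ (the input to your otherwise correct rank-one ``half-factorial implies at most one atom'' lemma) and your final component-wise verification of integral independence both rest on this unproved direct-sum claim, the argument does not go through as written; establishing $\gp(H)\cap\gp(O)=\{0\}$ under the hypothesis $\rank(M)\le 2$ is essentially where the real content of (b) $\Rightarrow$ (a) lives, and you have not supplied it.

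For comparison, the paper avoids the decomposition entirely and argues (b) $\Rightarrow$ (a) by contradiction with a case analysis on $|\ii(M)|$: for $|\ii(M)|=2$ it uses Corollary~\ref{cor:embedding dimension 2 implies cyclic kernel}; for $|\ii(M)|=3$ it uses Proposition~\ref{prop:pure irreducibles show in each unbalanced relation} to show every irredundant balanced relation is trivial; and for $|\ii(M)|\ge 4$ it picks two non-pure atoms $a_1 \neq a_2$ together with the purely short atom, uses the rank bound to produce a nontrivial relation among them, and derives $ma_1 = ma_2$, contradicting torsion-freeness. You may be able to salvage your outline by replacing the rank-additivity claim with a direct argument of this kind, but as it stands the proposal is incomplete.
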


\begin{proof}
	(a) $\Leftrightarrow$ (c): This is part of Theorem~\ref{thm:general characterization of OHFM}.
	\smallskip
	
	(a) $\Rightarrow$ (b): This is Corollary~\ref{cor:OHFM are PLSM}.
	\smallskip
	
	(b) $\Rightarrow$ (a): Assume that $M$ is a PLS monoid, and suppose for the sake of a contradiction that $M$ is not a proper length-factorial monoid. Since $M$ is finitely generated, it is atomic. As $M$ is not a factorial monoid, $|\ii(M)| \ge 2$. We split the rest of the proof into three cases.
	\smallskip
	
	CASE 1: $|\ii(M)| = 2$. In this case, the factorization congruence $\ker \pi$ is cyclic by Corollary~\ref{cor:embedding dimension 2 implies cyclic kernel}, and the existence of purely long/short irreducibles implies that any generator of $\ker \pi$ must be unbalanced, contradicting that $M$ is not a proper length-factorial monoid.
	\smallskip
	
	CASE 2: $|\ii(M)| = 3$. Take $a_1, a_2, a_3 \in M$ such that $\ii(M) = \{a_1, a_2, a_3 \}$. Assume, without loss of generality, that $a_1 \in \pl(M)$ and $a_2 \in \ps(M)$. Now take an irredundant and balanced factorization relation $(z_1, z_2) \in \ker \pi$. Since $a_1$ and $a_2$ are pure irreducibles, none of them can appear in $(z_1, z_2)$. Therefore only copies of the irreducible $a_3$ appear in both $z_1$ and $z_2$. This implies that $z_1 = z_2$. As $(z_1, z_2)$ was taking to be irredundant, it must be trivial. Hence $M$ is a proper length-factorial monoid, a contradiction.
	\smallskip
	
	CASE 3: $|\ii(M)| \ge 4$. Take $a_0 \in \pl(M)$ and $a_3 \in \ps(M)$, and then take $a_1, a_2 \in \ii(M) \setminus \{a_0, a_3\}$ such that $a_1 \neq a_2$. Since $a_0$ is a purely long irreducible, the submonoid $M' := \langle a_1, a_2, a_3 \rangle$ of $M$ must be a half-factorial monoid. Now take a nontrivial factorization relation $(z_1, z_2) \in \ker \pi_{M'}$. As $a_3$ is a purely short irreducible, it does not appear in $(z_1, z_2)$. Therefore either $(z_1, z_2)$ or $(z_2, z_1)$ equals $(m a_1, m a_2)$ for some $n \in \nn$. Now the fact that $M$ is torsion-free, along with the equality $m a_1 = m a_2$, guarantees that $a_1 = a_2$, which is a contradiction.
\end{proof}

\begin{cor}
	If a torsion-free monoid $M$ is generated by at most three elements, then it is a proper length-factorial monoid if and only if it is a PLS monoid.
\end{cor}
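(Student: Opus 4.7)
The plan is to derive this from Theorem~\ref{thm:PLSM of rank 2} by observing that the rank-two hypothesis is invoked in that proof only in Case~3, where $|\ii(M)| \ge 4$; the three-generator hypothesis of the corollary forces $|\ii(M_{\text{red}})| \le 3$, so only the first two cases need to be rerun.

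I would first pass to $M_{\text{red}}$, which is torsion-free and still generated by at most three elements. Being finitely generated, $M_{\text{red}}$ is atomic, and since every atom of a reduced atomic monoid must lie in each generating set, one has $|\ii(M_{\text{red}})| \le 3$. The forward direction then follows from Corollary~\ref{cor:OHFM are PLSM}, so I focus on the converse. Assuming that $M$ is a PLS monoid, I split on $|\ii(M_{\text{red}})|$. The case $|\ii(M_{\text{red}})| = 1$ cannot occur, since a monoid with a single atom is factorial and so has no pure irreducibles. The case $|\ii(M_{\text{red}})| = 2$ follows from Corollary~\ref{cor:embedding dimension 2 implies cyclic kernel}, which gives length-factoriality; the existence of a pure irreducible then supplies an unbalanced factorization relation, ruling out factoriality and yielding the ``proper'' part. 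The case $|\ii(M_{\text{red}})| = 3$ is handled exactly as in Case~2 of the proof of Theorem~\ref{thm:PLSM of rank 2}: writing $\ii(M_{\text{red}}) = \{a_1, a_2, a_3\}$ with $a_1 \in \pl(M)$ and $a_2 \in \ps(M)$, Proposition~\ref{prop:pure irreducibles show in each unbalanced relation} forbids $a_1$ and $a_2$ from appearing in any irredundant balanced factorization relation in $\ker \pi$, so such a relation can involve only copies of $a_3$ and must therefore be trivial; hence every irredundant factorization relation is unbalanced, and $M$ is a proper length-factorial monoid.

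There is no real obstacle here; the main point is simply to recognize that the three-generator condition replaces the rank-two condition because it sidesteps the $|\ii(M)| \ge 4$ case, which is the only place where the torsion-free and rank-two hypotheses were genuinely needed (to produce a nontrivial relation among three atoms that did not already involve a pure irreducible). The torsion-freeness hypothesis, inherited from the theorem, is not actually used in any of the cases that arise.
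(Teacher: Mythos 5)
Your proof is correct, but it takes a genuinely different route from the paper's. The paper treats Theorem~\ref{thm:PLSM of rank 2} as a black box: after passing to $M_{\text{red}}$, it embeds $M$ into $V = \qq \otimes_\zz \gp(M)$ (this is where torsion-freeness is actually used), notes that the at most three atoms span $V$, and splits on $\dim V$. If $\dim V = 3$ the atoms are linearly independent, so $M$ is free and is neither a proper length-factorial monoid nor a PLS monoid; if $\dim V \le 2$ then $\rank(M) \le 2$ and the theorem applies verbatim. You instead re-open the proof of the theorem: three generators force $|\ii(M_{\text{red}})| \le 3$, so only Cases 1 and 2 of that proof can occur, and neither of those uses the rank bound or torsion-freeness. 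Both arguments are sound (your Case-2-style argument correctly uses Proposition~\ref{prop:pure irreducibles show in each unbalanced relation} to exclude the two pure irreducibles from any irredundant balanced relation, and irredundancy then kills a relation supported on the single remaining atom). The trade-off: the paper's reduction is shorter and relies only on the statement of the theorem, but needs torsion-freeness for the injectivity of $\gp(M) \to \qq \otimes_\zz \gp(M)$; your version depends on the internals of the theorem's proof rather than its statement, and in exchange shows that the torsion-freeness hypothesis is superfluous for the three-generator corollary --- a minor strengthening the paper does not record.
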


\begin{proof}
	There is no loss in assuming that $M$ is reduced. Clearly, $|\ii(M)| \le 3$. Consider the $\qq$-space $V = \qq \otimes_\zz \gp(M)$, and identify $M$ with its isomorphic copy inside $V$ provided by the embedding $M \hookrightarrow \gp(M) \to \qq \otimes_\zz \gp(M)$. As $M$ is atomic, $\ii(M)$ is a spanning set of~$V$, whence $\dim V \le 3$. If $\dim V = 3$, then $\ii(M)$ is linearly independent over $\qq$, in which case $M$ is the free monoid on $\ii(M)$. In this case, $M$ is neither a proper length-factorial monoid nor a PLS monoid. On the other hand, if $\dim V \le 2$, then $\rank(M) \le 2$ and we are done via Theorem~\ref{thm:PLSM of rank 2}.
\end{proof}

However, for a finitely generated monoid containing four or more irreducibles, the PLS property may not imply the OHF property. This has been illustrated in Example~\ref{ex:fg rank-3 PLSM monoid that is not OHFM}. In the same example, we have seen that the condition of having rank at most $2$ is required in Theorem~\ref{thm:PLSM of rank 2}. On the other hand, the following example indicates that the condition of being torsion-free is also required in the statement of Theorem~\ref{thm:PLSM of rank 2}.

\begin{example}
	Fix $n \in \nn$ such that $n \ge 4$, and consider the submonoid $M := \langle a_k : k \in \ldb 1, n \rdb \rangle$ of the additive group $\zz_{n-2} \times \zz^2$, where $a_1 = (0,0,2)$, $a_2 = (0,0,3)$, and $a_k = (k-3,1,0)$ for every $k \in \ldb 3, n \rdb$. Since $M$ is finitely generated, it must be atomic. In addition, it can be readily verified that $\ii(M) = \{a_k : k \in \ldb 1, n \rdb \}$. Now suppose that $(z_1, z_2)$ is an irredundant and unbalanced factorization relation in $\ker \pi$, and assume that $|z_1| < |z_2|$. Since the second component of both $a_1$ and $a_2$ is $0$ and the second component of $a_3, \dots, a_n$ is $1$, the numbers of irreducibles in $\{a_3, \dots, a_n\}$ that appear in~$z_1$ and in $z_2$ must coincide. A similar observation based on third components shows that $a_1$ appears in~$z_2$ but not in $z_1$ and also that $a_2$ appears in $z_1$ but not in $z_2$. Hence $a_1 \in \pl(M)$ and $a_2 \in \ps(M)$, which implies that $M$ is a PLS monoid. Checking that $M$ is not a length-factorial monoid amounts to observing that the equality $(n-2) a_3 = (0,n-2,0) = (n-2) a_4$ yields an irredundant and balanced nontrivial factorization relation of~$M$.
\end{example}

Now we turn to characterize the PSLMs in the class consisting of all torsion-free rank-$1$ monoids, which have been recently studied under the name \emph{Puiseux monoids}. Puiseux monoids have been studied in connection with commutative algebra~\cite{CG19}, commutative factorization theory~\cite{CGG20a}, and noncommutative factorization theory~\cite{BG20}. An updated survey on the atomic structure of Puiseux monoids is given in~\cite{CGG20}. Notice that a Puiseux monoid is reduced unless it is a group (see~\cite[Section~24]{lF70} and \cite[Theorem 2.9]{rG84}).

\begin{prop}
	Let $M$ be an atomic Puiseux monoid. Then the following statements are equivalent.
	\begin{enumerate}
		\item[(a)] The monoid $M$ is a proper length-factorial monoid.
		\smallskip
		
		\item[(b)] The monoid $M$ is a PLS monoid.
		\smallskip
		
		\item[(c)] Both inclusions $\inf \ii(M) \in \pl(M)$ and $\sup \ii(M) \in \ps(M)$ hold.
		\smallskip
		
		\item[(d)] At least one of the inclusions $\inf \ii(M) \in \pl(M)$ or $\sup \ii(M) \in \ps(M)$ holds.
		\smallskip
		
		\item[(e)] The equality $|\ii(M)| = 2$ holds.
	\end{enumerate}
	If any of the conditions above holds, then $\pl(M)$ and $\ps(M)$ are singletons: $\pl(M) = \{ \inf \ii(M) \}$ and $\ps(M) = \{ \sup \ii(M) \}$.
\end{prop}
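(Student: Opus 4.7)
The plan is to establish the cycle $(a) \Rightarrow (b) \Rightarrow (c) \Rightarrow (d) \Rightarrow (e) \Rightarrow (a)$ and to extract the singleton description of $\pl(M)$ and $\ps(M)$ from the $(b) \Rightarrow (c)$ step. Throughout, the decisive structural fact is that because $M$ is rank-$1$ and torsion-free, any two distinct irreducibles $b,c$ lie in $\qq_{>0}$, so writing $c/b = m/n$ in lowest terms yields positive integers $m \neq n$ with $mb = nc$; the pair $(mb, nc)$ is then an irredundant and unbalanced factorization relation of $M$ involving only $b$ and $c$.

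The short implications come first. For $(a) \Rightarrow (b)$, invoke Corollary~\ref{cor:OHFM are PLSM}; the implication $(c) \Rightarrow (d)$ is immediate. For $(e) \Rightarrow (a)$, note that if $|\ii(M)| = 2$ then $M$ (reduced, since an atomic Puiseux monoid is reduced) is generated by two elements, so Corollary~\ref{cor:embedding dimension 2 implies cyclic kernel} makes it length-factorial; it fails to be factorial because a reduced factorial monoid of rank~$1$ has exactly one irreducible, and so $M$ is a \emph{proper} length-factorial monoid.

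The implication $(b) \Rightarrow (c)$ carries the bulk of the content. Given $a \in \pl(M)$, I claim $a = \min \ii(M)$: any $c \in \ii(M)$ with $c < a$ produces, via the rank-$1$ fact, a relation $mc = na$ with $m > n$, placing $a$ on the shorter side and contradicting $a \in \pl(M)$. The same argument shows $\pl(M)$ cannot contain two distinct elements, so $\pl(M) = \{\inf \ii(M)\}$, and the dual reasoning yields $\ps(M) = \{\sup \ii(M)\}$. This simultaneously gives $(c)$ and the singleton claim stated at the end of the proposition.

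For $(d) \Rightarrow (e)$, suppose without loss of generality that $a_0 = \inf \ii(M) \in \pl(M)$; since $a_0$ is by definition non-prime, $|\ii(M)| \geq 2$. If $|\ii(M)| \geq 3$, pick two distinct $b, c \in \ii(M) \setminus \{a_0\}$ and construct $mb = nc$ as above: this irredundant unbalanced relation avoids $a_0$, contradicting Proposition~\ref{prop:pure irreducibles show in each unbalanced relation}. Therefore $|\ii(M)| = 2$. The only step that calls for any thought is $(b) \Rightarrow (c)$, and even there the difficulty evaporates once one writes down the explicit relation between two rationals; everything else is bookkeeping organized around the cycle above.
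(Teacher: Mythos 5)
Your proof is correct and follows essentially the same route as the paper: the same implication cycle, with the key tool being the irredundant unbalanced relation $mb=nc$ between any two distinct irreducibles of a rank-$1$ monoid (the paper writes it as $\mathsf{n}(c)\mathsf{d}(b)\,b=\mathsf{n}(b)\mathsf{d}(c)\,c$, which is the same relation). The only cosmetic deviation is in (e)$\Rightarrow$(a), where you rule out factoriality by the count $|\ii(M)|=\rank(M)$ for reduced factorial monoids instead of exhibiting two distinct factorizations explicitly; both are fine.
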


\begin{proof}
	(a) $\Rightarrow$ (b): This is Corollary~\ref{cor:OHFM are PLSM}.
	\smallskip
	
	(b) $\Rightarrow$ (c): Suppose that $M$ is a PLS monoid, and take $a_\ell \in \pl(M)$ and $a_s \in \ps(M)$. Now take $a \in M$ such that $a \ne a_\ell$. Clearly, $n := \mathsf{n}(a)\mathsf{n}(a_\ell) \in M$ and, moreover, $z_1 := \mathsf{n}(a) \mathsf{d}(a_\ell) a_\ell$ and $z_2 := \mathsf{n}(a_\ell) \mathsf{d}(a) a$ are two factorizations in $\mathsf{Z}(n)$. Since the factorization relation $(z_1, z_2)$ is irredundant and $a_\ell$ appears in $z_1$, one finds that $|z_1| > |z_2|$. Therefore $\mathsf{n}(a) \mathsf{d}(a_\ell) > \mathsf{n}(a_\ell) \mathsf{d}(a)$, which means that $a > a_\ell$. Then we conclude that $\inf \ii(M) = a_\ell \in \pl(M)$. The equality $\sup \ii(M) = a_s$ can be argued similarly, from which one obtains that $\sup \ii(M) \in \ps(M)$.
	\smallskip
	
	(c) $\Rightarrow$ (d): This is obvious.
	\smallskip
	
	(d) $\Rightarrow$ (e): Assume now that $\inf \ii(M) \in \pl(M)$, and take $a_\ell \in \pl(M)$. Since $M$ is an atomic Puiseux monoid that is not a factorial monoid, it follows that $|\ii(M)| \ge 2$. Suppose, by way of contradiction, that $|\ii(M)| \ge 3$, and take irreducibles $a_1, a_2 \in \ii(M) \setminus \{a_\ell\}$ such that $a_1 \neq a_2$. Consider the element $n := \mathsf{n}(a_1) \mathsf{n}(a_2) \in M$. It is clear that both $z_1 := \mathsf{n}(a_2) \mathsf{d}(a_1) a_1$ and $z_2 := \mathsf{n}(a_1) \mathsf{d}(a_2) a_2$ are factorizations in $\mathsf{Z}(n)$, and they have different lengths because $a_1 \neq a_2$. However, the fact that $a_\ell$ does not appear in either $z_1$ or $z_2$ contradicts that $a_\ell \in \pl(M)$. As a result, $|\ii(M)| = 2$. One can similarly obtain $|\ii(M)| = 2$ assuming that $\sup \ii(M) \in \ps(M)$.
	\smallskip
	
	(e) $\Rightarrow$ (a): If $|\ii(M)| = 2$, it follows from Corollary~\ref{cor:embedding dimension 2 implies cyclic kernel} that $M$ is a length-factorial monoid. Taking $a_1$ and $a_2$ to be the two irreducibles of $M$, one finds that $\mathsf{n}(a_2) \mathsf{d}(a_1) a_1$ and $\mathsf{n}(a_1) \mathsf{d}(a_2) a_2$ are two different factorizations of $\mathsf{n}(a_1) \mathsf{n}(a_2) \in M$, and so $M$ is not a factorial monoid. Hence $M$ must be a proper length-factorial monoid.
\end{proof}

\begin{cor}
	Let $N$ be a numerical monoid. Then $\pl(N) \cup \ps(N)$ is nonempty if and only if $|\ii(N)| =~2$, in which case $\pl(N) = \{ \min \ii(N) \}$ and $\ps(N) = \{ \max \ii(N)\}$.
\end{cor}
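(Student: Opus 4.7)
The plan is to derive this corollary as an essentially direct application of the preceding proposition to the special case of numerical monoids. A numerical monoid $N$ is a cofinite submonoid of $(\nn_0,+)$, hence in particular an atomic, torsion-free, rank-$1$ reduced monoid, i.e., an atomic Puiseux monoid. Consequently, all five equivalences (a)--(e) of the preceding proposition apply verbatim to $N$, together with the concluding statement that $\pl(N)$ and $\ps(N)$ are singletons equal to $\{\inf \ii(N)\}$ and $\{\sup \ii(N)\}$ respectively whenever they are nonempty. Because $N$ is a numerical monoid it has only finitely many irreducibles, so $\inf \ii(N) = \min \ii(N)$ and $\sup \ii(N) = \max \ii(N)$.

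The argument is then a short chain of implications. First, assume $\pl(N) \cup \ps(N) \ne \emptyset$; pick a pure irreducible $a \in \pl(N) \cup \ps(N)$. Then condition (d) of the preceding proposition is satisfied (the proof of (b) $\Rightarrow$ (c) there shows that any purely long irreducible equals $\inf \ii(N)$ and any purely short irreducible equals $\sup \ii(N)$, so in either case one of the two inclusions in (d) holds). By (d) $\Rightarrow$ (e), we conclude $|\ii(N)| = 2$. Conversely, if $|\ii(N)| = 2$, then (e) $\Rightarrow$ (b) of the preceding proposition gives that $N$ is a PLS monoid, and in particular $\pl(N) \cup \ps(N) \ne \emptyset$.

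For the ``in which case'' clause, once we know $|\ii(N)| = 2$, the proposition's final assertion yields $\pl(N) = \{\inf \ii(N)\} = \{\min \ii(N)\}$ and $\ps(N) = \{\sup \ii(N)\} = \{\max \ii(N)\}$, as desired. There is no real obstacle here: the only minor point to verify is that the general Puiseux-monoid result specializes correctly, which amounts to replacing $\inf$ and $\sup$ with $\min$ and $\max$ using the finiteness of $\ii(N)$.
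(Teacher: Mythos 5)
Your proposal is correct and follows the route the paper intends: the corollary is stated without proof precisely because it is a direct specialization of the preceding proposition on atomic Puiseux monoids, with $\inf$ and $\sup$ becoming $\min$ and $\max$ by finiteness of $\ii(N)$. You also correctly handle the one non-obvious point, namely that the hypothesis $\pl(N) \cup \ps(N) \neq \emptyset$ is weaker than the PLS condition (b), by observing that the argument inside the proof of (b) $\Rightarrow$ (c) uses only one of the two pure irreducibles at a time, so a single pure irreducible already forces condition (d), and then (d) $\Rightarrow$ (e) yields $|\ii(N)| = 2$.
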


\bigskip
%%%%%%%%%%%%%%%%%%%%%%%%
%%%%%%%%%%%%%%%%%%%%%%%%
\section{Pure Irreducibles in Integral Domains}
\label{sec:integral domains}

We proceed to study the existence of purely long and purely short irreducibles in the context of integral domains. Throughout this section, we set $\pl(R) := \pl(R^\bullet)$ and $\ps(R) := \ps(R^\bullet)$ for any atomic integral domain $R$. In addition, when $R$ is a Dedekind domain, we let $\text{Cl}(R)$ denote the divisor class group of~$R$.

\smallskip
%%%%%%%%%%%%%%%%%%%%%%%
\subsection{Examples of Dedekind Domains}

For a finite-rank monoid $M$, we have already seen in Example~\ref{ex:monoids with shorts but no longs} that none of the conditions $\pl(M) = \emptyset$ and $\ps(M) = \emptyset$ implies the other one. In this subsection, we construct examples of Dedekind domains to illustrate that a similar statement holds in the context of atomic integral domains.

The celebrated Claborn's class group realization theorem \cite[Theorem~7]{lC66} states that for every abelian group $G$ there exists a Dedekind domain $D$ such that $\text{Cl}(D) \cong G$. The following refinement of this result, due to Gilmer, Heinzer, and the fourth author, will be crucial in our constructions.

\begin{theorem} \emph{\cite[Theorem~8]{GHS96}} \label{thm:distribution of prime ideals in DD}
	Let $G$ be a countably generated abelian group generated by $B \cup C$ with $B \cap C = \emptyset$ such that $B^* \cup C$ generates $G$ as a monoid for each cofinite subset $B^* \!$ of $B$. Then there exists a Dedekind domain $D$ with class group $G$ satisfying the following conditions:
	\begin{enumerate}
		\item the set $B \cup C$ consists of the classes of $G$ containing nonzero prime ideals;
		\smallskip
		
		\item the set $C$ consists of the classes of $G$ containing infinitely many nonzero prime ideals;
		\smallskip
		
		\item the set $B$ consists of the classes of $G$ containing finitely many nonzero prime ideals.
	\end{enumerate}
	Moreover, the number of prime ideals in each class contained in $B$ can be specified arbitrarily.
\end{theorem}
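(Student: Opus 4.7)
The plan is to deduce this refinement of Claborn's theorem by a careful localization argument applied to a ``saturated'' starting Dedekind domain. First, I would invoke Claborn's theorem together with standard strengthenings---for instance, via polynomial-ring constructions that exploit the splitting of primes in suitable overrings---to obtain a Dedekind domain $D_0$ with $\text{Cl}(D_0) \cong G$ in which \emph{every} class of $G$ contains infinitely many prime ideals. This provides a ``maximal'' realization from which the desired $D$ can be carved out.

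Second, to cut down to the prescribed distribution, I would discard the appropriate primes by localization. Concretely, let $\mathcal{P}$ denote the set of primes of $D_0$ that we wish to remove: for each $g \in B$, all but a prescribed (possibly empty) set of $n_g$ primes in class $g$, together with all primes lying in classes outside $B \cup C$. Letting $T$ be the complementary collection of nonzero primes, the intersection $D := \bigcap_{\mathfrak{p} \in T} (D_0)_{\mathfrak{p}}$ is again a Dedekind domain whose set of nonzero prime ideals is exactly $T$, by standard intersection results in the Dedekind setting. By construction, $D$ then satisfies conditions (1)--(3) and realizes the prescribed finite cardinalities in the classes of $B$.

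Third, I would verify that $\text{Cl}(D) \cong G$. The extension map $\text{Cl}(D_0) \to \text{Cl}(D)$ has kernel equal to the subgroup generated by the classes of the discarded primes. Surjectivity is immediate since every class in $B \cup C$ is still witnessed by some prime in $T$. Injectivity is the delicate point, and it is precisely where the cofinite monoid-generation hypothesis enters: if $n_g = 0$ is specified for some finite subset $F \subseteq B$, then the surviving classes are $(B \setminus F) \cup C$, and the hypothesis guarantees that this cofinite subcollection still generates $G$ as a monoid, so every class can be represented by an integral ideal built from surviving primes, and no new relation among classes is forced by the localization.

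The main obstacle I expect is the construction of the ``saturated'' $D_0$ in step one: while Claborn's original proof already yields \emph{some} Dedekind domain with class group $G$, securing the extra property that every class contains infinitely many prime ideals requires more delicate input, typically via an inductive or polynomial-ring construction that forces sufficient splitting in each target class. A secondary technical subtlety is ensuring that the intersection $\bigcap_{\mathfrak{p} \in T} (D_0)_{\mathfrak{p}}$ behaves as expected when $T$ is infinite; here one leans on the one-dimensional, integrally closed, Noetherian structure of a Dedekind domain to keep the intersection tame and to guarantee that its maximal spectrum is exactly $T$.
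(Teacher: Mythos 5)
First, a point of comparison: the paper does not prove this statement at all --- it is imported verbatim from \cite[Theorem~8]{GHS96} and used as a black box --- so there is no in-paper argument to measure your proposal against. Judged on its own, your proposal has a genuine and fatal gap.

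The break is in your third step. As you yourself record, for a subintersection $D=\bigcap_{\mathfrak{p}\in T}(D_0)_{\mathfrak{p}}$ of a Krull (in particular Dedekind) domain, Nagata's theorem gives a surjection $\text{Cl}(D_0)\to\text{Cl}(D)$ whose kernel is the subgroup generated by the classes of \emph{all} discarded primes. Your construction discards, for each $g\in B$, infinitely many primes lying in the class $g$ (all but the prescribed finite number), and discards every prime lying in a class outside $B\cup C$. Hence the kernel contains every $g\in B$ together with every populated class outside $B\cup C$, so $\text{Cl}(D)\cong G/\langle B\cup(G\setminus(B\cup C))\rangle$, a proper quotient of $G$ whenever $B$ contains a nonzero class --- which is exactly the situation the paper needs in Examples~\ref{ex:ID with purely short} and~\ref{ex:ID with purely long} (a unique prime in a generating class of $\mathbb{Z}/3\mathbb{Z}$, resp.\ of $\mathbb{Z}$). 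The cofinite monoid-generation hypothesis cannot rescue injectivity; its role is entirely different: it is the \emph{necessary} condition for realizability, since in any Dedekind domain the approximation theorem puts in every ideal class an integral ideal coprime to any finite set of primes, forcing the classes of the primes outside any finite set to generate $\text{Cl}(D)$ as a monoid. The only primes one can delete by passing to an overring without altering the class group are the principal ones, so no choice of ``saturated'' $D_0$ repairs the argument. The theorem therefore cannot be obtained by carving down a Claborn domain via localization; one must build the prescribed distribution of primes into the construction from the start, as is done in \cite{GHS96} and in the related realization results of Claborn, Grams, Michel--Steffan, and Skula cited in the paper.
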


Further refinements of Claborn's theorem in the direction of Theorem~\ref{thm:distribution of prime ideals in DD} were given by Grams~\cite{aG74}, Michel and Steffan~\cite{MS86}, and Skula~\cite{lS76}. We are in a position now to exhibit a Dedekind domain with a purely short (resp., long) irreducible but no purely long (resp., short) irreducibles.

\begin{example} \label{ex:ID with purely short}
	In this example, we will produce a Dedekind domain in which there is a purely short irreducible but no purely long irreducibles. Toward this end, consider a Dedekind domain $D$ with class group $\text{Cl}(D) \cong \mathbb{Z}/3\mathbb{Z}$. Since $\text{Cl}(D)$ is finite and the class of $\text{Cl}(D)$ corresponding to $2 + 3 \zz$ generates $\text{Cl}(D)$ as a monoid, one can invoke Theorem~\ref{thm:distribution of prime ideals in DD} to assume that the non-principal prime ideals of $D$ distribute within $\text{Cl}(D)$ in the following way. There is a unique nonzero prime ideal $P$ in the class of $\text{Cl}(D)$ corresponding to $1 + 3\zz$ and infinitely many nonzero prime ideals in the class corresponding to $2 + 3\zz$. Observe that we can separate non-prime irreducible principal ideals $I$ of $D$ into the following three types, according to their (unique) factorizations into prime ideals:
	\[
		(1) \ \ I = P^3, \quad \ (2) \ \ I = PQ, \quad \text{ or }  \quad (3) \ \ I = Q_1Q_2Q_3,
	\]
	where $Q, Q_1, Q_2$, and $Q_3$ are prime ideals in the class of $\text{Cl}(D)$ corresponding to $2 + 3\zz$. By construction, there is only one irreducible principal ideal of type~(1), namely, $P^3$.
	
	Take $a \in \ii(D)$ such that $(a) = P^3$. Proving that $a \in \ps(D)$ amounts to verifying that $(a)$ is a purely short irreducible in the atomic monoid $M$ consisting of all nonzero principal ideals of $D$. To do so, suppose that $(a)$ appears in an irredundant factorization relation $(z_1, z_2) \in \ker \pi_M$, where
	\[
		z_1 := (P^3)^k \prod_{i=1}^m(PQ_i) \prod_{j=1}^n(Q_{j,1}Q_{j,2}Q_{j,3}) \ \quad \text{and} \ \quad z_2 := \prod_{i=1}^{m'}(PQ^\prime_i) \prod_{j=1}^{n'} (Q^\prime_{j,1}Q^\prime_{j,2}Q^\prime_{j,3})
	\]
	for some $k \in \nn$, $m, n, m', n' \in \nn_0$, and ideals $Q_i, Q'_i, Q_{j,i}$, and $Q'_{j,i}$ in the class corresponding to $2 + 3 \zz$. As~$D$ is Dedekind, comparing the numbers of copies of $P$ in all the irreducibles of the ideal factorizations~$z_1$ and $z_2$, one obtains that $3k = m' - m$. Now comparing the numbers of copies of prime ideals in the class $2 + 3 \zz$ in all the irreducibles of the ideal factorizations~$z_1$ and $z_2$, one obtains that $m' - m = 3(n-n')$, and so $n-k = n'$. As a result,
	\[
		|z_1| = k+m+n = (3k+m) + (n-k) - k = m' + n' - k < |z_2|.
	\]
	Therefore $a \in \ps(D)$, as desired.
	
	Let us proceed to argue that $D$ contains no purely long irreducibles. By the previous paragraph, $\pl(D)$ contains no irreducibles generating ideals of type~(1). Take $a_2, a_3 \in \ii(D)$ such that the ideals $(a_2)$ and $(a_3)$ are of type~(2) and type~(3), respectively. Then take prime ideals $Q_1, \dots, Q_5$ in the class of $\text{Cl}(D)$ corresponding to $2 + 3\zz$ such that the equalities $(a_2) = PQ_1$ and $(a_3) = Q_2 Q_3 Q_4$ hold, and $Q_5 \notin \{Q_1, Q_2, Q_3, Q_4\}$. Now consider the ideal factorizations
	\begin{align*}
		z_1 &:= (P^3) (Q_2Q_3Q_4), \\
		z_2 &:= (PQ_2) (PQ_3) (PQ_4), \\
		z_3 &:= (P^3) (PQ_1) (Q_3Q_4Q_5) (Q_5^3), \text{and}\\
		z_4 &:= (PQ_5)^4 (Q_1Q_3Q_4).
	\end{align*}
	Notice that $(z_1, z_2) \in \ker \pi_M$ is irredundant and satisfies $|z_1| < |z_2|$. Because $Q_2 Q_3 Q_4$ appears in~$z_1$, it follows that $a_3 \notin \pl(D)$. On the other hand, $(z_3, z_4) \in \ker \pi_M$ is also irredundant, and it satisfies $|z_3| < |z_4|$. Because $PQ_1$ appears in $z_3$, one finds that $a_2 \notin \pl(D)$. As a result, no irreducible generating an ideal of type~(2) or type~(3) is purely long, whence $\pl(D) = \emptyset$.
\end{example}
%\medskip

To complement Example~\ref{ex:ID with purely short}, we proceed to construct a Dedekind domain having a purely long irreducible but no purely short irreducibles.

\begin{example} \label{ex:ID with purely long}
	Let $D$ be a Dedekind domain with $\text{Cl}(D) \cong \mathbb{Z}$. Since $\text{Cl}(D)$ is countable and the set $\{\pm 1\}$ generates $\text{Cl}(D)$ as a monoid, we can assume in light of Theorem~\ref{thm:distribution of prime ideals in DD} that the non-principal prime ideals of $D$ distribute within $\text{Cl}(D)$ as follows. There is a unique nonzero prime ideal, which we denote by $P$, in the class of $\text{Cl}(D)$ corresponding to $-2$; there is a unique nonzero prime ideal, which we denote by $Q$, in the class of $\text{Cl}(D)$ corresponding to $2$; and there are infinitely many nonzero prime ideals in each of the classes corresponding to $-1$ and $1$. We denote the prime ideals in the class corresponding to $-1$ by (annotated)~$N$ and the prime ideals in the class corresponding to $1$ by (annotated) $M$. Notice that we can separate non-prime irreducible principal ideals $I$ of $D$ into the following four types, according to their (unique) factorizations into prime ideals:
	\[
		(1) \ \ I = PQ, \quad \ (2) \ \ I = P N_1 N_2, \quad \ (3) \ \ I = Q M_1 M_2, \quad \text{ or }  \quad (4) \ \ I = NM,
	\]
	where $N, N_1, N_2$ belong to the class of $\text{Cl}(D)$ corresponding to $-1$ and $M, M_1, M_2$ belong to the class of $\text{Cl}(D)$ corresponding to $1$.
	
	Take $a \in \ii(D)$ such that $(a) = PQ$. As in Example~\ref{ex:ID with purely short}, proving that $a \in \pl(D)$ amounts to showing that the principal ideal $(a)$ is a long irreducible in the atomic monoid $M$ consisting of all nonzero principal ideals of $D$. To do this, suppose that $(a)$ appears in an irredundant ideal factorization relation $(z_1, z_2) \in \ker \pi_M$, and write
	\[
		z_1 := (PQ)^k \prod_{i=1}^m (PN_{i,1}N_{i,2}) \prod_{j=1}^n (Q M_{j,1} M_{j,2}) \prod_{k=1}^t (N_k M_k)
	\]
	and
	\[
		z_2 := \prod_{i=1}^{m'} (PN^\prime_{i,1}N^\prime_{i,2}) \prod_{j=1}^{n'} (QM^\prime_{j,1}M^\prime_{j,2}) \prod_{k=1}^{t'} (N^\prime_k M^\prime_k)
	\]
	for some $k,m,n,t,m',n',t' \in \nn_0$. Since $D$ is Dedekind, after comparing the numbers of copies of the prime ideals $P$ and $Q$ that appear in all irreducibles of $z_1$ and $z_2$, one obtains that $m -m' = n - n' = -k$. In addition, after comparing the numbers of copies of prime ideals in the class corresponding to $-1$ that appear in $z_1$ and $z_2$, one obtains that $t - t'  = -2(m - m') = 2k$. As a result,
	\[
		|z_1| = k + m + n + t = (m' + n' + t') + k + (m - m') + (n - n') + (t - t') = |z_2| + k > |z_2|.
	\]
	Therefore we can conclude that $a \in \pl(D)$.
	
	Finally, let us verify that $D$ contains no purely short irreducibles. Since any irreducible generator of $PQ$ is purely long, $D$ contains no purely short irreducibles of type~(1). Take $a_2 \in \ii(D)$ such that $(a_2)$ has type~(2), and then take prime ideals $N_1, N_2$ in the class of $\text{Cl}(D)$ corresponding to $-1$ such that $(a_2) = P N_1 N_2$. In addition, take distinct prime ideals $N_3$ and $N_4$ in the class of $\text{Cl}(D)$ corresponding to $-1$ such that $N_3, N_4 \notin \{N_1, N_2\}$. Finally, take distinct prime ideals $M_1$ and $M_2$ in the class of $\text{Cl}(D)$ corresponding to $1$. Consider the ideal factorizations% relation $(z_1, z_2) \in \ker \pi_M$, where
	\[
		z_1 := (PQ)(PN_1N_2)(M_1N_3)(M_2N_4) \quad \text{ and } \quad z_2 := (PN_1N_3)(PN_2N_4)(QM_1M_2).
	\]
	Observe that $(z_1, z_2) \in \ker \pi_M$ is an irredundant factorization relation satisfying $|z_1| > |z_2|$. Since $P N_1 N_2$ appears in $z_1$, it follows that $a_2 \notin \ps(D)$. As a result, no irreducible generating an ideal of type~(2) can be purely short. In a similar manner, one can verify that no irreducible generating an ideal of type~(3) is purely short. Now let $a_4$ be an irreducible of $D$ such that $(a_4)$ is a principal ideal of type~(4). Take $N$ and $M$ in the classes of $\text{Cl}(D)$ corresponding to $-1$ and $1$, respectively, such that $(a_4) = NM$, and then consider the ideal factorizations
	\[
		z_3 := (PQ)(NM)^2 \quad \text{ and } \quad z_4 := (P N^2)(Q M^2).
	\]
	Notice that $(z_3, z_4) \in \ker \pi_M$ is an irredundant factorization relation satisfying $|z_3| > |z_4|$. Since $NM$ appears in $z_3$, it follows that $a_4 \notin \ps(D)$. Therefore none of the irreducibles generating ideals of type~(4) is purely short. As a consequence, $\ps(D) = \emptyset$.
\end{example}

\smallskip
%%%%%%%%%%%%%%%%%%%%%%%%%%%%%%%%
\subsection{Integral Domains Do Not Satisfy the PLS Property}

In Section~\ref{sec:finite rank monoids}, we have proved that in the class of torsion-free monoids with rank at most $2$, being a proper length-factorial monoid and being a PLS monoid are equivalent notions. It was proved in~\cite{CS11} that an integral domain has the length-factorial property only if it is a unique factorization domain. In addition, being a proper length-factorial monoid implies having both purely long and purely short irreducibles. This begs the tantalizing question as to whether there is an atomic integral domain with both purely long and a purely short irreducibles. The Dedekind domains constructed in the previous subsection do not satisfy this property, and this is not a coincidence.

\begin{theorem} \label{thm:domains are not PLSD}
	Let $R$ be an atomic domain. Then either $\pl(R) = \emptyset$ or $\ps(R) = \emptyset$.
\end{theorem}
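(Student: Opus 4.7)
I would argue by contradiction: suppose both $\pl(R) \neq \emptyset$ and $\ps(R) \neq \emptyset$, fix $a \in \pl(R)$ and $b \in \ps(R)$, and seek a contradiction. The conceptual point is that $R$ has an additive structure unavailable in a general atomic monoid (where PLS monoids do exist, by Example~\ref{ex:fg rank-3 PLSM monoid that is not OHFM}), so the proof must genuinely exploit ring addition and not merely multiplication.

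First, I would invoke Proposition~\ref{prop:pure irreducibles show in each unbalanced relation}: since $a$ is purely long and $b$ is purely short, each of them must appear in every irredundant unbalanced factorization relation of $R$, with $a$ in the longer factorization and $b$ in the shorter one. So we may fix an irredundant unbalanced relation that translates to the ring identity
\[
	a \, x_1 \cdots x_m \;=\; b \, y_1 \cdots y_n,
\]
with $m > n$, the associate classes of $\{a, x_1, \dots, x_m\}$ and $\{b, y_1, \dots, y_n\}$ disjoint, and $a, b$ themselves non-associate (else $a \in \pl(R) \cap \ps(R)$, which is impossible).

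The heart of the argument is to exploit the additive identity $a = (a+b) - b$ and substitute it into the multiplicative identity above. Writing $x := x_1 \cdots x_m$ and $y := y_1 \cdots y_n$, this substitution yields
\[
	(a+b)\,x \;=\; b\,(x + y)
\]
in $R$. A case analysis on whether $a+b$ is a unit or a non-unit (it is nonzero because $a, b$ are non-associate) should transform this identity into a new factorization relation of some element of $R$ in which $a$ lies on the shorter side or $b$ lies on the longer side. For instance, when $a+b$ is a unit, the identity forces $b \mid x$, so $x = b\tilde x$ and (substituting back into the original relation) $y = a\tilde x$; any factorization $\tilde x = t_1 \cdots t_r$ then produces twin factorization relations $(x_1 \cdots x_m,\,b\,t_1 \cdots t_r)$ and $(y_1 \cdots y_n,\,a\,t_1 \cdots t_r)$ whose length constraints ($a$ on the longer side, $b$ on the shorter) combined with $m > n$ can be pushed to a numerical contradiction. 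The non-unit case for $a+b$ is handled analogously by first factoring $a+b$ into irreducibles and incorporating those factors into the identity.

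The main obstacle I anticipate is bookkeeping of irredundancy and lengths: ensuring that after cancellations required to make the new relations irredundant, one of them genuinely places the pure irreducible on the wrong side. Some flexibility may be needed (for example, working with a minimal-defect witness relation, or multiplying the identity through by additional atoms) to force the length inequalities to clash strictly. But the essential mechanism is the conversion of the additive identity $a = (a+b) - b$ into a multiplicative divisibility like $b \mid x$, which is exactly what ring addition buys us over a bare monoid structure.
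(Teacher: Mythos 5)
Your core mechanism is the right one, and it is essentially the paper's: the published proof also works with an irredundant unbalanced relation in which every purely long atom sits on the long side and every purely short atom on the short side, forms the auxiliary ring element $\alpha_1^{a_1}-\beta_1^{b_1}$ (your $a+b$ plays the same role), observes that the purely short atom divides the resulting element, and then uses Proposition~\ref{prop:pure irreducibles show in each unbalanced relation} to constrain where the pure atoms may appear in the new factorization relations. So the ``additive identity converted into a divisibility'' idea is exactly what the paper exploits.

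The genuine gap is in your endgame. The claim that the twin relations' ``length constraints ($a$ on the longer side, $b$ on the shorter) combined with $m>n$ can be pushed to a numerical contradiction'' fails when $a$ or $b$ occurs with multiplicity greater than one in the original relation. Writing the witness relation as $(a^kX,\,b^jY)$ with no atom of $X$ or $Y$ associate to $a$ or $b$, your two derived relations only yield $j-1+|Y| < 1+r < k-1+|X|$ together with $k,j\ge 2$, which is perfectly consistent with $k+|X|>j+|Y|$ (e.g.\ $k=j=2$, $|X|=5$, $|Y|=1$, $r=3$); no contradiction on lengths alone is available. The contradiction must instead come from counting copies of the pure atom: the long side of the relation obtained from $(a+b)a^{k-1}\pi(X)=b\bigl(a^{k-1}\pi(X)+b^{j-1}\pi(Y)\bigr)$ contains at most $k-1$ copies of $a$ (no atom of $a+b$ can be associate to $a$ or $b$, since $a\mid a+b$ would give $a\mid b$), yet it must contain at least one copy because $a\in\pl(R)$ and the relation is irredundant and unbalanced with $b$ on its shorter side. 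Choosing the witness relation to minimize the number of copies of $a$ on its long side then finishes the argument. Note that your proposed minimization of the \emph{defect} $|z_1|-|z_2|$ is not the right quantity; it is minimality of the exponent of the purely long atom that bites, and this is precisely the minimality the paper invokes in its Cases~2 and~3. Finally, the non-unit case of $a+b$ is not ``analogous'' in the way you describe --- one does not get $b\mid x$ there --- but it is handled by the same copy-counting once the atoms of $a+b$ are inserted on the long side, since none of them can be associate to $a$ or to $b$.
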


\begin{proof}
	Suppose, by way of contradiction, that $R$ is an atomic domain such that both $\pl(R)$ nor $\ps(R)$ are nonempty sets. We recall that by Corollary~\ref{cor:finitely many purely irreducibles} both $\pl(R)$ and $\ps(R)$ are finite sets. Set $\ell := |\pl(R)|$ and $s := |\ps(R)|$, and then write
	\[
		\pl(R) =: \{\alpha_1, \alpha_2, \dots, \alpha_\ell\} \quad \text{ and } \quad \ps(R) =: \{ \beta_1, \beta_2, \dots, \beta_s \}.
	\]
	Take $\rho \in \ker \pi_R$ to be an irredundant and unbalanced factorization relation. It follows from Proposition~\ref{prop:pure irreducibles show in each unbalanced relation} that each $\alpha_i$ appears in the longer factorization component of $\rho$ and each~$\beta_j$ appears in the shorter factorization component of $\rho$. Therefore there exist factorizations $z,z' \in \mathsf{Z}(R)$ such that 
	\[
		\rho = (\alpha_1^{a_1} \alpha_2^{a_2} \cdots \alpha_\ell^{a_\ell} z, \, \beta^{b_1}_1 \beta^{b_2}_2 \cdots \beta^{b_s}_s z')
	\]
	for some $a_1, \dots, a_\ell, b_1, \dots, b_s \in \nn$ such that none of the $\alpha_i$'s appears in $z$ and none of the $\beta_j$'s appears in $z'$. In addition, as the factorization is irredundant, none of the $\alpha_i$'s appears in $z'$ and none of the $\beta_j$'s appears in $z$. We now derive contradictions in the following three cases.
	\smallskip
	
	CASE 1: $\ell, s \ge 2$. Consider the element
	\begin{equation} \label{eq:auxiliar element}
		x_1 := \alpha_2^{a_2} \alpha_3^{a_3} \cdots \alpha_\ell^{a_\ell} (\alpha_1^{a_1} - \beta_1^{b_1}) \pi_R(z) \in R. %\in \mathsf{Z}_R(x).
	\end{equation}
	We claim that $x_1 \neq 0$. Because $R$ contains no nonzero zero-divisors, verifying that $x_1 \neq 0$ amounts to showing that $\alpha_1^{a_1} - \beta_1^{b_1} \neq 0$. Indeed, this must be the case: if $(\alpha_1^{a_1}, \beta_1^{b_1}) \in \ker \pi_R$, then the fact that $\ell \ge 2$ would force the purely long irreducible $\alpha_2$ to appear in the left factorization component of the relation $(\alpha_1^{a_1}, \beta_1^{b_1})$. Hence both $\alpha_1^{a_1} - \beta_1^{b_1}$ and $x_1$ belong to~$R^\bullet$. On the other hand, it follows from~\eqref{eq:auxiliar element} that $\beta_1$ divides~$x_1$ in $R$. Thus, there exist $w_1 \in \mathsf{Z}(R)$ and $w_2 \in \mathsf{Z}_R(\alpha_1^{a_1} - \beta_1^{b_1})$ such that $\beta_1 w_1 \in \mathsf{Z}_R(x_1)$ and $w := \alpha^{a_2}_2 \alpha^{a_3}_3 \cdots \alpha^{a_\ell}_\ell w_2 z \in \mathsf{Z}_R(x_1)$. Now consider the factorization relation $(\beta_1 w_1,w) \in \ker \pi_R$.
	\smallskip
	
	CASE 1.1: $\beta_1$ does not appear in $w$. Since $\beta_1 \in \ps(R)$, it follows that $|w| > |\beta_1 w_1|$. Now the inclusion $\alpha_1 \in \pl(R)$ implies that $\alpha_1$ must appear in $w$ and, therefore, in $w_2$. Thus, $\alpha_1$ divides $\beta_1^{b_1}$ in $R$. As a result, there is a factorization relation $(\alpha_1 w_1', \beta_1^{b_1}) \in \ker \pi_R$ for some $w'_1 \in \mathsf{Z}(R)$. Clearly, $(\alpha_1 w_1', \beta_1^{b_1})$ is a non-diagonal factorization relation. Since $\beta_1 \in \ps(R)$, th inequality $|\alpha_1 w'_1| > |\beta_1^{b_1}|$ must hold. Therefore $(\alpha_1 w_1', \beta_1^{b_1})$ is an unbalanced factorization relation in which $\beta_2$ does not appear. This contradicts that $\beta_2 \in \ps(R)$.
	\smallskip
	
	CASE 1.2: $\beta_1$ appears in $w$. Because $\beta_1$ does not appear in $z$, we see that $\beta_1$ must appear in $w_2$. This implies that $\beta_1$ divides $\alpha_1^{a_1}$ in $R$. Now we can follow an argument completely analogous to that we just used in CASE 1.1 to obtain the desired contradiction.
	\smallskip
	
	CASE 2: $\{\ell,s\} = \{1,n\}$ for some $n \in \nn_{\ge 2}$. Assume will first assume that $\ell = 1$. % and $s > 1$.
	Recall that $\rho = (\alpha^{a_1}_1 z , \, \beta^{b_1}_1 \beta^{b_2}_2 \cdots \beta^{b_n}_n z')$. In this case, we also impose the condition that the exponent~$a_1$ is the minimum number of copies of the purely long irreducible $\alpha_1$ that can appear in any irredundant and unbalanced factorization relation in $\ker \pi_R$. Using notation similar to that of CASE~1, we now set
	\begin{equation} \label{eq:auxiliary element 2}
		x_2 := \alpha_1^{a_1-1} (\alpha_1 - \beta_1^{b_1}) \pi_R(z) \in R.
	\end{equation}
	Notice that $x_2 \neq 0$ as otherwise $\alpha_1 = \beta_1^{b_1}$, which is clearly impossible. Since $\ell = 1$, it follows from~\eqref{eq:auxiliary element 2} that $\beta_1$ divides $x_2$ in $R$. Then one can take $w_1 \in \mathsf{Z}(R)$ and $w_2 \in \mathsf{Z}_R(\alpha_1 - \beta_1^{b_1})$ such that $(\beta_1 w_1, \alpha_1^{a_1 - 1} w_2 z) \in \ker \pi_R$. It is clear that $\beta_1$ does not divide $\alpha_1 - \beta_1^{b_1}$, whence $\beta_1$ does not appear in $\alpha_1^{a_1 - 1} w_2 z$, which implies that $|\beta_1 w_1| < |\alpha_1^{a_1 - 1} w_2 z|$. By the minimality of $a_1$, we see that $\alpha_1$ must appear in $w_2$. Thus, $\alpha_1$ must divide $\beta_1^{b_1}$ in $R$. In this case, $(\alpha_1 w_3, \beta_1^{b_1}) \in \ker \pi_R$ for some $w_3 \in \mathsf{Z}(R)$, which is a contradiction because $\beta_2$ does not appear in $\beta_1^{b_1}$. The case when $\ell > 1$ and $s = 1$ follows similarly.
	\smallskip
	
	CASE 3: $\ell = s = 1$. In this case, $\rho = (\alpha_1^{a_1} z, \, \beta_1^{b_1} z')$. We assume that the exponent $a_1$ satisfies the same minimality condition that we imposed in CASE~2. Consider the element
	\begin{equation} \label{eq:auxiliary element 3}
 		x_3 := \alpha_1^{a_1-1} (\alpha_1 - \beta_1) \pi_R(z) \in R.
	\end{equation}
	As $\alpha_1 - \beta_1$ is nonzero and $\beta_1$ divides $x_3$ in $R$, there exist factorizations $w_1 \in \mathsf{Z}(R)$ and $w_2 \in \mathsf{Z}_R(\alpha_1 - \beta_1)$ such that $(\beta_1 w_1, \alpha_1^{a_1 - 1}w_2z) \in \ker \pi_R$. Since $\beta_1$ does not divide $\alpha_1 - \beta_1$ in $R$, it cannot appear in $\alpha_1^{a_1 - 1} w_2 z$ and, therefore, $|\beta_1 w_1| < |\alpha_1^{a_1 - 1} w_2 z|$. This, along with the minimality of $a_1$, implies that $\alpha_1$ appears in $w_2$. However, this contradicts that $\alpha_1$ does not divide $\alpha_1 - \beta_1$ in $R$.
\end{proof}

As a consequence of Theorem~\ref{thm:domains are not PLSD}, we rediscover the main result of~\cite{CS11}.

\begin{cor} \emph{\cite[Theorem~2.10]{CS11}}
	Let $R$ be an integral domain. Then $R$ is a unique factorization domain if and only if $R^\bullet$ is a length-factorial monoid.
\end{cor}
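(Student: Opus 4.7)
The plan is to derive the corollary directly from the two main results assembled just before it, namely Theorem~\ref{thm:domains are not PLSD} (an atomic domain has either $\pl(R) = \emptyset$ or $\ps(R) = \emptyset$) and Corollary~\ref{cor:OHFM are PLSM} (every proper length-factorial monoid is a PLS monoid, so $\pl$ and $\ps$ are both nonempty). So the argument is essentially a one-line combination once the framework is in place.

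For the forward implication I would just unpack the definitions: if $R$ is a UFD then $R^\bullet$ is a factorial monoid, so $|\mathsf{Z}(x)| = 1$ for every $x \in R^\bullet$, and in particular no two distinct factorizations of $x$ can share a length (they can't even exist). So $R^\bullet$ is length-factorial. No obstacle here.

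For the reverse implication, assume $R^\bullet$ is length-factorial. Then $R^\bullet$ is atomic by the definition of length-factoriality, so $R$ is an atomic domain. Suppose for contradiction that $R$ is not a UFD; equivalently, $R^\bullet$ is not a factorial monoid, so $R^\bullet$ is a \emph{proper} length-factorial monoid. By Corollary~\ref{cor:OHFM are PLSM}, $R^\bullet$ is then a PLS monoid, which by definition means $\pl(R) \neq \emptyset$ and $\ps(R) \neq \emptyset$ simultaneously. This directly contradicts Theorem~\ref{thm:domains are not PLSD}. Hence $R^\bullet$ must be factorial, i.e., $R$ is a UFD.

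There is essentially no obstacle in the corollary itself; all the work lives in Theorem~\ref{thm:domains are not PLSD}. The only thing to be careful about is that the statement makes sense for integral domains (not just atomic ones), so I would briefly note that if $R^\bullet$ is length-factorial then by definition $R^\bullet$ is atomic, placing us within the hypotheses of Theorem~\ref{thm:domains are not PLSD} and Corollary~\ref{cor:OHFM are PLSM}.
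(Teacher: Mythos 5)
Your proposal is correct and follows the paper's own argument exactly: the forward direction is immediate from factoriality, and the reverse direction combines Theorem~\ref{thm:domains are not PLSD} with the fact that a proper length-factorial monoid has both purely long and purely short irreducibles (Corollary~\ref{cor:OHFM are PLSM}). Your added remark that length-factoriality entails atomicity, placing $R$ within the hypotheses of the cited results, is a sensible precaution that the paper leaves implicit.
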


\begin{proof}
	Clearly, if $R$ is a unique factorization domain, then $R^\bullet$ is a factorial monoid and, therefore, a length-factorial monoid. For the reverse implication, suppose that $R^\bullet$ is a length-factorial monoid. By Theorem~\ref{thm:domains are not PLSD}, either $\pl(R)$ is empty or $\ps(R)$ is empty. Therefore~$R^\bullet$ is not a proper length-factorial monoid, and so it is a factorial monoid. Hence $R$ is a unique factorization domain.
\end{proof}

\bigskip
%%%%%%%%%%%%%%%
%%%%%%%%%%%%%%%
\section*{Acknowledgments}

The authors would like to thank an anonymous referee for providing comments and suggestions that helped improve the quality of the present paper. During this collaboration, the third author was supported by the NSF award DMS-1903069.

\bigskip
%%%%%%%%%%%%%%
%%%%%%%%%%%%%%


\begin{thebibliography}{20}
	
	\bibitem{AA10} D. D. Anderson and D.~F. Anderson: \emph{Factorization in integral domains IV}, Comm. Algebra \textbf{38} (2010) 4501--4513.
	
	\bibitem{AAZ90} D.~D. Anderson, D.~F. Anderson, and M. Zafrullah: \emph{Factorizations in integral domains}, J. Pure Appl. Algebra \textbf{69} (1990) 1--19.
	
	\bibitem{AAZ92} D.~D. Anderson, D.~F. Anderson, and M. Zafrullah: \emph{Factorizations in integral domains II}, J. Algebra \textbf{152} (1992) 78--93.
	
	\bibitem{ACS94} D.~F. Anderson, S. T. Chapman, and W. W. Smith: \emph{On Krull half-factorial domains with infinite cyclic divisor class group}, Houston J. Math. \textbf{20} (1994) 561--570.
	
	\bibitem{ACS95} D.~F. Anderson, S. T. Chapman, and W. W. Smith: \emph{Overrings of half-factorial domains, II}, Comm. Algebra \textbf{23} (1995) 3961--3976.
	
	\bibitem{BG20} N. R. Baeth and F. Gotti: \emph{Factorizations in upper triangular matrices over information semialgebras}, J. Algebra \textbf{562} (2020) 466--496.
	
	\bibitem{BGG11} V. Blanco, P.~A. Garc\'ia-S\'anchez, and A. Geroldinger: \emph{Semigroup-theoretical characterizations of arithmetical invariants with applications to numerical monoids and Krull monoids}, Illinois J. Math. \textbf{55} (2011) 1385--1414.
	
	\bibitem{BG09} W. Bruns and J. Gubeladze: \emph{Polytopes, Rings and K-theory}, Springer Monographs in Mathematics, Springer, Dordrecht, 2009.
	
	\bibitem{lC60} L. Carlitz: \emph{A characterization of algebraic number fields with class number two}, Proc. Amer. Math Soc., \textbf{11} (1960) 391--392.
	
	\bibitem{CC00} S.~T. Chapman and J. Coykendall: \emph{Half-factorial domains, a survey}. In: \emph{Non-Noetherian Commutative Ring Theory} (Eds. S.~T. Chapman and S. Glaz) pp. 97--115, Mathematics and Its Applications, vol. 520, Kluwer Academic Publishers, Springer, Boston 2000.
	
	\bibitem{CG97} S.~T. Chapman and A. Geroldinger: \emph{Krull domains and monoids, their sets of lengths, and associated combinatorial problems}. In: \emph{Factorization in Integral Domains} (Ed. D. D. Anderson) pp. 73--112, Lecture Notes in Pure and Applied Mathematics, vol. 189, Marcel Dekker, Boca Raton 1997.
	
	\bibitem{CGG20a} S. T. Chapman, F. Gotti, and M. Gotti: \emph{Factorization invariants of Puiseux monoids generated by geometric sequences}, Comm. Algebra \textbf{48} (2020) 380--396.
	
	\bibitem{CGG20} S.~T. Chapman, F. Gotti, and M. Gotti: \emph{When is a Puiseux monoid atomic?}, Amer. Math. Monthly (to appear). Preprint on arXiv: https://arxiv.org/pdf/1908.09227.pdf
	
	\bibitem{CHK02} S.~T. Chapman, F. Halter-Koch, and U. Krause: \emph{Inside factorial monoids and integral domains}, J. Algebra \textbf{252} (2002) 350--375.
	
	\bibitem{CS92} S.~T. Chapman and W.~W. Smith: \emph{On the HFD, CHFD, and $k$-HFD properties in Dedekind domains}, Comm. Algebra \textbf{20} (1992) 1955--1987.
	
	\bibitem{CG20} J. Correa-Morris and F. Gotti: \emph{On the additive structure of algebraic valuations of cyclic free semirings}. Preprint on arXiv: https://arxiv.org/pdf/2008.13073.pdf
	
	\bibitem{jC97} J. Coykendall: \emph{A characterization of polynomial rings with the half-factorial property}. In: \emph{Factorization in Integral Domains} (Ed. D. D. Anderson) pp. 291--294, Lecture Notes in Pure and Applied Mathematics, vol. 189, Marcel Dekker, Boca Raton 1997.
	
	\bibitem{jC01} J. Coykendall: \emph{Half-factorial domains in quadratic fields}, J. Algebra \textbf{335} (2001) 417--430.
	
	\bibitem{jC03} J. Coykendall: \emph{On the integral closure of a half-factorial domain}, J. Pure and Appl. Algebra \textbf{180} (2003) 25--34.
	
	\bibitem{jC99} J. Coykendall: \emph{The half-factorial property in integral extensions}, Comm. Algebra \textbf{27} (1999) 3153--3159.
	
	\bibitem{CG19} J. Coykendall and F. Gotti: \emph{On the atomicity of monoid algebras}, J. Algebra \textbf{539} (2019) 138--151.
	
	\bibitem{CS11} J. Coykendall and W.~W. Smith: \emph{On unique factorization domains}, J. Algebra \textbf{332} (2011) 62--70.
	
	\bibitem{lC66} L. Claborn: \emph{Every abelian group is a class group}, Pacific J. Math. \textbf{18} (1966) 219--222.
	
	\bibitem{aF06} A. Foroutan: \emph{Monotone chains of factorizations}. In: \emph{Focus on Commutative Ring Research} (Ed. A. Badawi) pp. 107--130, Nova Sci. Publ., New York 2006.
	
	\bibitem{lF70} L. Fuchs: \emph{Infinite Abelian Groups I}, Academic Press, 1970.
	
	\bibitem{GLTZ19} W. Gao, C. Liu, S. Tringali, Q. Zhong: \emph{On half-factoriality of transfer Krull monoids}, Comm. Algebra (to appear). DOI: https://doi.org/10.1080/00927872.2020.1800720
	
	\bibitem{aG91} A. Geroldinger: \emph{On the arithmetic of certain not integrally closed Noetherian integral domains}, Comm. Algebra \textbf{19} (1991) 685--698.
	
	\bibitem{aG94} A. Geroldinger: \emph{T-block monoids and their arithmetical applications to certain integral domains}, Comm. Algebra \textbf{22} (1994) 1603--1615.
	
	\bibitem{GGT21} A. Geroldinger, F. Gotti, and S. Tringali: \emph{On strongly primary monoids, with a focus on Puiseux monoids}, J. Algebra \textbf{567} (2021) 310--345.
	
	\bibitem{GH06} A.~Geroldinger and F.~Halter-Koch: \emph{Non-unique Factorizations: Algebraic, Combinatorial and Analytic Theory}, Pure and Applied Mathematics Vol. 278, Chapman \& Hall/CRC, Boca Raton, 2006.
	
	\bibitem{GHL07} A. Geroldinger, W. Hassler, and G. Lettl: \emph{On the arithmetic of strongly primary monoids}, Semigroup Forum \textbf{75} (2007) 567--587.
	
	\bibitem{GKR15} A. Geroldinger, F. Kainrath, and A. Reinhart: \emph{Arithmetic of seminormal weakly Krull monoids and domains}, J. Algebra \textbf{444} (2015) 201--245.
	
	\bibitem{GR19} A. Geroldinger and A. Reinhart: \emph{The monotone catenary degree of monoids of ideals}, Internat. J. Algebra Comput. \textbf{29} (2019) 419--457.
	
	\bibitem{GR20} A. Geroldinger and M. Roitman: \emph{On strongly primary monoids and domains}, Comm. Algebra \textbf{48} (2020) 4085--4099.
	
	\bibitem{GZ19} A. Geroldinger and Q. Zhong: \emph{A characterization of seminormal C-monoids}, Boll. Unione Mat. Ital. \textbf{12} (2019) 583--597.
	
	\bibitem{rG84} R. Gilmer: \emph{Commutative Semigroup Rings}, The University of Chicago Press, 1984.
	
	\bibitem{GHS96} R. Gilmer, and W. Heinzer, and W. W. Smith: \emph{On the distribution of prime ideals within the ideal class group}, Houston J. Math. \textbf{22} (1996) 51--59.
	
	\bibitem{fG20} F. Gotti: \emph{Geometric and combinatorial aspects of submonoids of a finite-rank free commutative monoid}, Linear Algebra Appl. \textbf{604} (2020) 146--186.
	
	\bibitem{fG19} F.~Gotti: \emph{Increasing positive monoids of ordered fields are FF-monoids}, J. Algebra \textbf{518} (2019) 40--56.
	
	\bibitem{fG20a} F. Gotti: \emph{Irreducibility and factorizations in monoid rings}. In: \emph{Numerical Semigroups} (Eds. V. Barucci, S. T. Chapman, M. D'Anna, and R. Fr\"oberg) pp. 129--139, Springer INdAM Series, vol. 40, Springer, Cham 2020.
	
	\bibitem{aG74} A. Grams: \emph{The distribution of prime ideals of a Dedekind domain}, Bull. Austral. Math. Soc. \textbf{11} (1974) 429--441.
	
	\bibitem{MO16} P. Malcolmson and F. Okoh: \emph{Half-factorial subrings of factorial domains}, J. Pure Appl. Algebra \textbf{220} (2016) 877--891.
	
	\bibitem{MS86} D. Michel and J. Steffan: \emph{Repartition des ideaux premiers parmi les classes d'ideaux dans un anneau de Dedekind et equidecomposition}, J. Algebra \textbf{98} (1986) 82--94.
	
	\bibitem{aP10} A. Philipp: \emph{A characterization of arithmetical invariants by the monoid of relations}, Semigroup Forum \textbf{81} (2010) 424--434.
	
	\bibitem{aP15} A. Philipp: \emph{A characterization of arithmetical invariants by the monoid of relations II: The monotone catenary degree and applications to semigroup rings}, Semigroup Forum \textbf{90} (2015) 220--250.
	
	\bibitem{aP12} A. Philipp: \emph{A precise result on the arithmetic of non-principal orders in algebraic number fields}, J. Algebra Appl. \textbf{11} (2012) 1250087, 42pp.
	
	\bibitem{PS20} A. Plagne and W. A. Schmid: \emph{On congruence half-factorial Krull monoids with cyclic class group}, Journal of Combinatorial Algebra \textbf{3} (2020) 331--400.
	
	\bibitem{mR11} M. Roitman: \emph{A quasi-local half-factorial domain with an atomic non-half-factorial integral closure}, J. Commut. Algebra \textbf{3} (2011) 431--438.
	
	\bibitem{lS76} L. Skula: \emph{On c-semigroups}, Acta Arith. \textbf{31} (1976) 247--257.
	
	\bibitem{aZ76} A. Zaks: \emph{Half-factorial domains}, Bull. Amer. Math. Soc. \textbf{82} (1976) 721--723.
	
	\bibitem{aZ80} A. Zaks: \emph{Half-factorial domains}, Israel J. of Math. \textbf{37} (1980) 281--302.

\end{thebibliography}
\end{document}